\numberwithin{equation}{section}
\numberwithin{equation}{section}
\title{ The diophantine exponent of the $\mathbb{Z}/q\mathbb{Z}$ points of  $S^{d-2}\subset S^d$}
\author{M. W. Hassan, Y. Mao, N. T. Sardari, R. Smith, X. Zhu}
\address{Department of Mathematics, UW-Madison, Madison, WI 53706}
\email{mwhassan@wisc.edu}
\address{Department of Mathematics, UW-Madison, Madison, WI 53706}
\email{mao36@wisc.edu}
\address{Department of Mathematics, UW-Madison, Madison, WI 53706}
\email{ntalebiz@math.wisc.edu}
\address{Department of Mathematics, UW-Madison, Madison, WI 53706}
\email{rlsmithjr1234@gmail.com}
\address{Department of Mathematics, UW-Madison, Madison, WI 53706}
\email{xzhu274@wisc.edu}
\date{\today}
\renewcommand{\vec}[1]{\mathbf{#1}}
	\newtheorem{thm}{Theorem}[section]
	\newtheorem{prop}[thm]{Proposition}
        \newtheorem{conj}[thm]{Conjecture}
	\newtheorem{rem}[thm]{Remark}	
	\newtheorem{lem}[thm]{Lemma}
	\theoremstyle{defi}
	\newtheorem{defi}[thm]{Definition}
	\theoremstyle{pf}
	\numberwithin{equation}{section}
\begin{document}
\maketitle
\begin{abstract}\label{abstract}
Assume a polynomial-time algorithm for factoring integers, Conjecture~\ref{conj},  $d\geq 3,$ and $q$ and $p$ are prime numbers, where $p\leq q^A$ for some $A>0$. We develop a polynomial-time algorithm in $\log(q)$ that lifts every $\mathbb{Z}/q\mathbb{Z}$ point of $S^{d-2}\subset S^{d}$ to a $\mathbb{Z}[1/p]$  point of $S^d$ with the minimum height. We implement our algorithm for $d=3 \text{ and }4$. Based on our numerical results, we formulate a conjecture which can be checked in polynomial-time and gives the optimal bound on the  diophantine exponent of the $\mathbb{Z}/q\mathbb{Z}$ points of $S^{d-2}\subset S^d$. 
\end{abstract}
\section{Introduction}\label{introduction}
\subsection{Motivation}
Let $$S^d(R):=\{(x_0,\dots,x_d): x_0^2+\dots+x_d^2=1, \text{ where }x_i\in R \text{ for }  0\leq i\leq d \},$$
where $R$ is any commutative ring. Let $S^{d-2}(R)\subset S^{d}(R)$, be the subset of the points with the coordinates $(x_0,\dots,x_{d-2},0,0)\in S^{d}(R).$
Suppose that \(q \) is a   prime number, and 
\(\vec{a} \in S^d(\mathbb \mathbb{Z}/q\mathbb{Z})\).
Let $p$ be an odd prime number, we say that
\(\vec{s}\in S^d(\mathbb{Z}[1/p])  \) is an integral lift of \(\vec{a}\) if $\vec{s}\equiv \vec{a}~\mathrm{ mod }~q.$ Let $H:S^d(\mathbb{Z}[1/p]) \to \mathbb{Z}^{+}$ be the natural height function defined by  
$$H((\frac{n_0}{p^{h_0}},\dots,\frac{n_d}{p^{h_d}})):=\max_{0\leq i\leq d}{p^{h_i}}.$$
where $\gcd(n_i,p)=1$ for $0\leq i\leq d.$ We define the diophantine exponent of \(\vec{a} \in S^d(\mathbb \mathbb{Z}/q\mathbb{Z})\) with respect to $p$ to be 
$$
w_p(\vec{a}):=\frac{d-1}{d}\min\log_q(H(\vec{s})): \vec{s}\in S^d(\mathbb{Z}[1/p]) \text{ lifts } \vec{a} .
$$
Assume that $d\geq 3.$ By the circle method (Hardy-Littlewood circle method for $d\geq 4$ and its refinement by Kloosterman \cite{Kloos} for $d=3$), it follows that $w_p(\vec{a})<\infty$ for every $\vec{a}\in S^d(\mathbb \mathbb{Z}/q\mathbb{Z})\). Moreover,  it follows from the circle method  that the number of the integral points $\vec{s}\in S^d(\mathbb{Z}[1/p])$ with $H(\vec{s})\leq p^{h}$ is less than  $O_{\epsilon}( p^{h(d-1+\epsilon)})$ for any $\epsilon>0$. It is elementary to check that $\#S^d(\mathbb \mathbb{Z}/q\mathbb{Z}) \gg q^d.$ Hence,   by a Pigeon-hole argument, it follows that  $w_p(\vec{a})\geq 1+o_q(1)$ for all but a tiny fractions of 
$\vec{a}\in S^d(\mathbb \mathbb{Z}/q\mathbb{Z}).$ It follows from the work of the third author \cite[Theorem 1.2]{optimal} that $w_p(\vec{a})\leq 2-2/d+o_{q,\epsilon}(1)$ for every \(\vec{a}\in S^d(\mathbb \mathbb{Z}/q\mathbb{Z})\), $d\geq 4$, and $p\leq q^{\epsilon},$ where $o_{q,\epsilon}(1)\to 0$ as $q\to \infty$ and $\epsilon\to 0.$ Moreover, this bound is essentially optimal. The third author also conjectured \cite[Conjecture 1.3]{optimal} that $w_p(\vec{a})\leq 4/3+o_{q,\epsilon}(1)$ for $d=3.$ This is the non-archimedian version of  Sarnak's conjecture on the covering exponent of integral points on the sphere; see \cite{SarnakGate}, \cite{optimal}, and \cite{BKS}. 

The main motivation for studying  $w_p(\vec{a})$ for  \(\vec{a} \in S^{d-2}(\mathbb \mathbb{Z}/q\mathbb{Z})\subset S^d(\mathbb \mathbb{Z}/q\mathbb{Z})\)  comes from the navigation algorithms for the  LPS Ramanujan graphs $X^{p,q},$ and its archimedean analogue which is the Ross and  Selinger algorithm for navigating $PSU(2)$ with the golden quantum gates; see \cite{LPS},  \cite{Margulis}, \cite{Petit2008}, \cite{complexity}, and also \cite{Selinger1} and \cite{Parzanchevski}. 

More precisely, the vertices of the LPS Ramanujan graph $X^{p,q}$ are labeled with $\pm\vec{a} \in S^3(\mathbb \mathbb{Z}/q\mathbb{Z})/\pm$, if $p$ is a quadratic residue mod $q.$ It follows from \cite{LPS} and \cite[Theorem 1.7]{complexity} that   the shortest path between  $\pm\vec{a}$ and $\pm(1,0,\dots,0)$ with even number of edges is $\log_p(q^3) w_p(\vec{a})$. In \cite[Theorem 1.2]{complexity}, the third author developed and implemented a conditional polynomial-time algorithm that gives the shortest possible path between any  \( \pm \vec{a} \in S^{1}(\mathbb \mathbb{Z}/q\mathbb{Z})\subset S^3(\mathbb \mathbb{Z}/q\mathbb{Z})\) and $\pm(1,0,\dots,0).$ He also proved that finding the shortest possible path between a generic point  \( \pm \vec{a} \in S^3(\mathbb \mathbb{Z}/q\mathbb{Z})\) and $\pm(1,0,\dots,0)$ is essentially NP complete \cite[Corollary 1.9]{complexity}. The archimedean analogue of this NP-completeness result is in the work of Sarnak and Parzanchevski \cite{Parzanchevski}.

Therefore, the diophantine exponent $w_p(\vec{a})$ for \(\vec{a} \in S^{1}(\mathbb \mathbb{Z}/q\mathbb{Z})\subset S^3(\mathbb \mathbb{Z}/q\mathbb{Z})\) and its archimedean analogue is proportional to the size of the output of  these navigation algorithms. Understanding  the size of the output of these algorithms  is a fundamental problem in quantum computing. Since it helps us to optimize the cost of the implementation of an algorithm on a quantum computer  if one is ever build.  

The only known upper bound  is $w_p(\vec{a})\leq 2+o(1)$ \cite{LPS} which implies $\text{diam}X^{p,q} \leq (2+o(1))\log_p(q^3)$, where $\text{diam}X^{p,q}$ is the diameter of the LPS Ramanujan graph $X^{p,q}$.   The third author also proved that  $\text{diam}(X^{p,q}) \geq (4/3) \log_p(q^3)$ for some integral values of $q$\cite{Naser}.  The third author conjectured that $\text{diam}(X^{p,q})=4/3 \log_p(q^3)+o_q(1).$

The numerical results of Ross and Selinger \cite{Selinger1} and the third author \cite{complexity,Naser} suggests that for all but tiny fractions of \(\vec{a} \in S^{1}(\mathbb \mathbb{Z}/q\mathbb{Z})\subset S^3(\mathbb \mathbb{Z}/q\mathbb{Z})\), we have $w_p(\vec{a})=1+o_q(1).$ It is also observed that $\max_{\vec{a}}(w_p(\vec{a}))=4/3+o_q(1).$ The main goal of this paper is to give a theoretical explanation to these observation. 

\subsection{Main results}
In this paper, we develop a conditional  polynomial-time algorithm for lifting every \(\vec{a} \in S^{d-2}(\mathbb \mathbb{Z}/q\mathbb{Z})\subset S^d(\mathbb \mathbb{Z}/q\mathbb{Z})\)  to an integral point \(\vec{s}\in S^d(\mathbb{Z}[1/p])  \) with the minimal height. In particular, we have a conditional polynomial time algorithm in $\log(q)$  that computes $w_p(\vec{a})$ for every \(\vec{a} \in S^{d-2}(\mathbb \mathbb{Z}/q\mathbb{Z})\subset S^d(\mathbb \mathbb{Z}/q\mathbb{Z})\). We prove that our algorithm terminates in polynomial-time by assuming  a polynomial-time algorithm for factoring integers and  an arithmetic conjecture, which we formulate next.

 Let $\vec{t}:=(t_0,\dots,t_{d-2})$ and $Q(\vec{t}):=\frac{N}{q^2}-(t_0+\frac{b_0}{q})^2-\dots-(t_{d-2}+\frac{b_{d-2}}{q})^2$, where $N$, $b_0, \dots, b_{d-2}$ are integers,   $N\equiv b_0^2+\dots+b_{d-2}^2 \mod q,$ and $\gcd(N,q)=1.$ Define 
\begin{equation}\label{Afr}
A_{Q,r}:=\big\{\vec{t}\in\mathbb{Z}^{d-1}: Q(\vec{t})\in \mathbb{Z}, |\vec{t}|<r, \text{ and } Q(\vec{t})\geq 0  \big\},
\end{equation}
where $r>0$ is some positive real number. 
 
%

 
\begin{conj}\label{conj}
Let $Q$ and $A_{Q,r}$ be as above. There exists constants $\gamma>0$ and $C_{\gamma}>0$, independent of $Q$ and $r$, such that if $ |A_{Q,r}|> C_{\gamma}(\log N)^{\gamma}$ for some $r>0$, then $Q$ expresses a sum of two squares inside $ A_{Q,r}$. \end{conj}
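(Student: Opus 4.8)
The plan is to detect a sum of two squares inside $A_{Q,r}$ by a first-moment sieve, converting the two-squares condition into a divisor sum via Jacobi's formula. For $m\ge1$ write $r_2(m)=\#\{(a,b)\in\mathbb Z^2:a^2+b^2=m\}=4\sum_{\delta\mid m}\chi(\delta)$, where $\chi$ is the non-principal character modulo $4$; since $\sum_{\delta\mid m}\chi(\delta)\ge0$, with equality precisely when $m$ is not a sum of two squares, it suffices to show that
\[
S_1:=\sum_{\mathbf t\in A_{Q,r}}r_2\big(Q(\mathbf t)\big)=4\sum_{\delta\ge1}\chi(\delta)\,N(\delta),\qquad N(\delta):=\#\{\mathbf t\in A_{Q,r}:\delta\mid Q(\mathbf t)\},
\]
is strictly positive. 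A standard hyperbola / divisor-switching manipulation lets one restrict throughout to $\delta\le\sqrt{Q(\mathbf t)}\le\sqrt N/q$, at the cost of also carrying a dual sum weighted by $\chi(Q(\mathbf t))$.

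For $\delta$ coprime to $2q$ one evaluates $N(\delta)$ by lattice-point counting: after the substitution $u_i=qt_i+b_i$ one has $q^2Q(\mathbf t)=N-\sum_{i=0}^{d-2}u_i^2$, so the condition $\delta\mid Q(\mathbf t)$ becomes $\sum u_i^2\equiv N\pmod\delta$, whence $N(\delta)=\rho(\delta)\,\delta^{-(d-1)}|A_{Q,r}|+\mathrm{err}(\delta)$, where $\rho(\delta)=\#\{\mathbf u\bmod\delta:\sum u_i^2\equiv N\pmod\delta\}$ is multiplicative and, by the classical Gauss-sum evaluation (here $d-1\ge2$ is used), equals $\delta^{\,d-2}$ times a bounded multiplicative function. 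Summing the main terms, $\sum_{\delta\ge1}\chi(\delta)\rho(\delta)\delta^{-(d-1)}$ is a conditionally convergent product of local factors each of the shape $1+O(1/p)$, positive and bounded away from $0$ (truncating at $\sqrt N/q$ costs at most an $N^{o(1)}$ factor, and the $\delta$ with $q\mid\delta$ contribute negligibly), so the main term of $S_1$ is $\gg|A_{Q,r}|\,N^{-o(1)}$. The errors $\mathrm{err}(\delta)$ are boundary terms for a ball of radius $\le\sqrt N/q$ inside a sublattice of index $\le\delta^{\,d-1}$, of size $\ll\big(\sqrt N/(q\delta)+1\big)^{d-2}$.

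The crux is that bounding $\sum_\delta\chi(\delta)\,\mathrm{err}(\delta)$ (and the dual sum) by absolute values over all $\delta\le\sqrt N/q$ is hopelessly lossy: it leaves terms that are a positive power of $N$. One must instead extract cancellation from the sign changes of $\chi$, i.e.\ prove a nontrivial bilinear (Type~II) estimate for $\sum_{\delta}\chi(\delta)\,\mathbf 1[\delta\mid Q(\mathbf t)]$ averaged over $\mathbf t\in A_{Q,r}$ --- precisely the mechanism that makes sums of two squares in sparse polynomial sequences (in the style of Friedlander and Iwaniec) so delicate, with $d=3$ the critical case in which a Kloosterman-type refinement along the lines of \cite{Kloos} would also be needed. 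A second-moment packaging is equally natural: bound $S_2:=\sum_{\mathbf t\in A_{Q,r}}r_2(Q(\mathbf t))^2\ll|A_{Q,r}|(\log N)^{O(1)}$ (the logarithmic powers coming from the primes $\equiv1\bmod4$ in the divisor correlation) and conclude via Cauchy--Schwarz that
\[
\#\{\mathbf t\in A_{Q,r}:Q(\mathbf t)\text{ is a sum of two squares}\}\ \ge\ \frac{S_1^{\,2}}{S_2}\ \gg\ \frac{|A_{Q,r}|}{(\log N)^{O(1)}},
\]
which exceeds $1$ once $|A_{Q,r}|>C_\gamma(\log N)^{\gamma}$ for suitable $\gamma$ and $C_\gamma$ --- but the upper bound for $S_2$ collides with the same bilinear obstacle. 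I therefore expect the genuine difficulty to be this bilinear (Type~II) estimate, made uniform in $q$, $N$ and the $b_i$, and robust in the sparse regime where $r$ is much smaller than the natural scale $\sqrt N/q$ (in the motivating application $N=p^{2h}$ and $r\asymp\sqrt N/q$, so that regime never arises, but we do not see how to dispense with it in general) --- which is exactly why the statement is recorded here only as a conjecture.
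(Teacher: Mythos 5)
The statement you were asked to prove is not proved in the paper: it is Conjecture~1.1, and together with a polynomial-time factoring algorithm it forms the standing hypothesis $(*)$ under which Theorem~\ref{main} and Theorem~\ref{dioexpthm} are established. There is therefore no ``paper's own proof'' to compare against, and you have correctly recognized this --- your write-up does not claim a proof, but instead diagnoses the obstruction.

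Your diagnosis is sound. The first-moment reduction via Jacobi's formula $r_2(m)=4\sum_{\delta\mid m}\chi(\delta)$, the identification of the multiplicative local density $\rho(\delta)$ after passing to $u_i=qt_i+b_i$, the observation that a na\"ive triangle-inequality treatment of the boundary terms is hopelessly lossy in the sparse regime, and the pointer to the bilinear / Type~II estimate in the style of Friedlander--Iwaniec as the genuine crux, are all accurate and are precisely the kind of considerations that place this statement out of reach of current technology (with $d=3$, i.e.\ a binary $Q$, being the critical case, as you note). The Cauchy--Schwarz second-moment packaging and the matching of your heuristic lower bound $\gg |A_{Q,r}|/(\log N)^{O(1)}$ against the hypothesis $|A_{Q,r}|>C_\gamma(\log N)^\gamma$ also explain why the conjecture is formulated with a polylogarithmic threshold. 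One small point worth flagging: in the application inside Proposition~\ref{intlift} the set $A_{Q,r}$ is taken at the natural scale $r\asymp\sqrt N/q$, so the extreme sparse regime you worry about is not actually the one invoked in the paper; the conjecture is nonetheless stated uniformly in $r$, so your concern about uniformity is a fair criticism of the conjecture's strength rather than of your own analysis. In short, your conclusion that this must remain a conjecture is exactly the paper's position.
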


 We denote the following  assumptions by $(*)$:
  \begin{enumerate}\label{assumptions}
 \item  There exists a  polynomial-time algorithm for factoring integers,
 \item  Conjecture~\ref{conj} holds.
 \end{enumerate}
This is a version of our  main theorem. 
\begin{thm} \label{main}
Assume~$(*),$ $d\geq 3$ is fixed, $p\neq q$ and $p\leq q^{A}$ for some fixed $A>0$. We develop a  deterministic  polynomial-time algorithm  in $\log(q)$,   that on input $\vec{a}\in S^{d-2}(\mathbb \mathbb{Z}/q\mathbb{Z})\subset S^d(\mathbb \mathbb{Z}/q\mathbb{Z})$  returns a minimal  lift  \(\vec{s}\in S^d(\mathbb{Z}[1/p])  \) of $\vec{a}$.
\end{thm}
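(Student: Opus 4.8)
The plan is to reduce the problem of lifting $\vec{a}\in S^{d-2}(\mathbb{Z}/q\mathbb{Z})$ to an explicit Diophantine search, and then to show that this search is governed by the count $|A_{Q,r}|$ so that Conjecture~\ref{conj} forces termination in polynomial time. First I would set up coordinates: write $\vec{a}=(a_0,\dots,a_{d-2},0,0)$ and observe that an integral lift $\vec{s}=(n_0/p^{h_0},\dots,n_d/p^{h_d})\in S^d(\mathbb{Z}[1/p])$ of minimal height $p^h$ can, after clearing denominators, be encoded as a vector $(x_0,\dots,x_d)\in\mathbb{Z}^{d+1}$ with $x_0^2+\dots+x_d^2=p^{2h}$ and $x_i\equiv p^h a_i\bmod q$ for $i\le d-2$, $x_{d-1}\equiv x_d\equiv 0\bmod q$. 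The idea is to peel off the last two coordinates: for each admissible pair $(x_{d-1},x_d)$ that is divisible by $q$, set $N:=p^{2h}-x_{d-1}^2-x_d^2$, and then the first $d-1$ coordinates must lie on a shifted sphere $\sum_{i=0}^{d-2}(t_i+b_i/q)^2 = N/q^2$ after substituting $x_i = q t_i + b_i$ with $b_i$ a fixed lift of $p^h a_i$. This is exactly the quadratic form $Q(\vec t)$ in the setup, and a lift exists at height $p^h$ iff for some such $(x_{d-1},x_d)$ and some $\vec t$ with $|\vec t|\lesssim p^h/q$ we have $Q(\vec t)\ge 0$, $Q(\vec t)\in\mathbb{Z}$, and $Q(\vec t)$ is a sum of two squares (which then recovers $x_{d-1},x_d$ — here one has to be a little careful and iterate the reduction, or handle the $d=3$ case where only one extra coordinate remains and $Q$ itself must be a square, versus $d\ge 4$).

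Next I would organize the algorithm as a loop over the height: for $h=0,1,2,\dots$ (equivalently over candidate values $r\approx p^h/q$), enumerate the $O((p^h/q)^{?})$ choices of the coordinates being peeled off, form the corresponding $Q$, and test solvability. The key quantitative input is a lower bound for $|A_{Q,r}|$ as $r$ grows: using the circle method estimates quoted in the introduction (Kloosterman for $d=3$, Hardy–Littlewood for $d\ge 4$), once $r$ is a small power of $p^h$ the expected main term makes $|A_{Q,r}|$ exceed $C_\gamma(\log N)^\gamma$, at which point Conjecture~\ref{conj} guarantees that $Q$ is a sum of two squares at some $\vec t\in A_{Q,r}$, hence the lift exists. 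The point is that $h_{\min}=\tfrac{d}{d-1}w_p(\vec a)/\log(q)\cdot\log q$ is bounded (indeed $w_p(\vec a)\le 2-2/d+o(1)$ unconditionally for $d\ge4$ by \cite{optimal}), so the outer loop runs $O(\log q)$ times; within each iteration, the number of peeled-coordinate choices is polynomial in $q$, and for each the solvability test — given a polynomial-time factoring oracle — reduces to factoring $N$ (or $Q(\vec t)$) and checking the sum-of-two-squares condition via the prime factorization, then using a polynomial-time algorithm (Cornacchia / Tonelli–Shanks) to reconstruct $x_{d-1},x_d$. Finally, among all lifts found at the minimal successful height one compares heights and outputs the minimizer.

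I would then verify the complexity bookkeeping: every integer manipulated has $O(\log q)$ bits (since $p\le q^A$ and $h=O(\log q)$), the factoring steps are polynomial by assumption $(*)$(1), and the enumeration of peeled coordinates and of $\vec t\in A_{Q,r}$ for the \emph{first} $r$ at which the search succeeds is polynomial because we only need to exhibit one solution, not list the whole set — and Conjecture~\ref{conj} tells us a solution appears as soon as the set is polylogarithmically large, so we never enumerate more than $C_\gamma(\log N)^\gamma$ candidates before hitting one. The main obstacle, and the place where all the arithmetic content sits, is precisely this last point: turning the \emph{existence} statement of Conjecture~\ref{conj} into an \emph{efficiently findable} solution. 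One has to argue that the enumeration order of $\vec t\in A_{Q,r}$ (say by increasing $|\vec t|$) encounters a sum-of-two-squares value after only polylogarithmically many trials — this requires not just $|A_{Q,r}|$ large in the ball of radius $r$, but large already in balls of radius slightly less than $r$, so that one can run a doubling search on $r$ and stop the instant the conjectural threshold is crossed, and it requires that the density of sum-of-two-squares values among $Q(\vec t)$ is not pathologically small. I expect the proof to spend most of its effort making this search-to-decision reduction rigorous, with the coordinate reduction and the complexity audit being comparatively routine.
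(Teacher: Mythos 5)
Your high-level plan matches the paper's: fix the height $h$, set $N=p^{2h}$, reduce the lifting problem to the Diophantine equation
\[(qt_0+b_0)^2+\dots+(qt_{d-2}+b_{d-2})^2+(qt_{d-1})^2+(qt_d)^2=N,\]
rearrange so that the last two coordinates must be a sum of two squares $Q(\vec t)=t_{d-1}^2+t_d^2$, then invoke Conjecture~\ref{conj} plus the factoring oracle and Schoof/Cornacchia. You also correctly cite \cite[Theorem~1.2]{optimal} as the source of the bound on the outer loop over $h$ (the paper takes $0\leq h\leq 4\log_p q$). That much is right and matches Proposition~\ref{intlift} and the surrounding proof.

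However, what you dismiss as ``the coordinate reduction and the complexity audit being comparatively routine'' is precisely where the paper concentrates its effort, and your sketch has a genuine gap there. The issue is that Condition~(2), $2\langle\vec b,\vec t\rangle\equiv k\bmod q$, confines $\vec t$ to a coset of the sublattice $\mathcal L(\vec a)\subset\mathbb Z^{d-1}$ of covolume $q$, while Condition~(1) confines $\vec t$ to a ball of radius $\approx\sqrt N/q$; one must enumerate the lattice points in this intersection, which can range from empty to huge, and naively listing them is not polynomial. The paper handles this by passing to a $3/4$-LLL-reduced basis $\{\vec u_0,\dots,\vec u_{d-2}\}$ of $\mathcal L(\vec a)$, using Babai's angle bound (Theorem~\ref{babai}) to prove that in these coordinates the positivity set $\{F\geq 0\}$ is sandwiched between two comparable boxes (Lemma~\ref{boxlem}), and, when some $|\vec u_k|$ is too large to fit a box at all, fixing $x_k$ to the $O_d(1)$ admissible values via Lemma~\ref{line} and recursing in one lower dimension. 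Without this lattice-geometric control there is no justification that the candidate set is either provably empty (so we can move to $h+1$) or of size $\leq C_\gamma(\log N)^\gamma$ (so brute force works by Case~1) or large enough that Conjecture~\ref{conj} applies (Case~2). Your proposal says nothing about LLL, Babai, or the degenerate-box recursion, and this is the core of the proof.

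Two smaller points. First, your sentence ``for each admissible pair $(x_{d-1},x_d)$ that is divisible by $q$, set $N:=p^{2h}-x_{d-1}^2-x_d^2$'' describes an enumeration over $\gg (p^h/q)^2$ pairs, which is not polynomial in $\log q$; the paper never enumerates the last two coordinates --- they are \emph{recovered} by writing $Q(\vec t)$ as a sum of two squares, as you yourself say in the next clause. Second, your remark that for $d=3$ ``only one extra coordinate remains and $Q$ itself must be a square'' is incorrect: the ambient setup is $S^{d-2}\subset S^d$, so for $d=3$ one lifts $S^1(\mathbb Z/q\mathbb Z)\subset S^3(\mathbb Z/q\mathbb Z)$ and there are still exactly two free coordinates; the sum-of-two-squares structure is uniform in $d\geq 3$. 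Finally, the search-to-decision concern you raise at the end is legitimate but already built into the statement of Conjecture~\ref{conj}: it holds for \emph{some} $r>0$, so one enumerates $A_{Q,r}$ by increasing $r$ and the conjecture guarantees a hit among the first $\approx C_\gamma(\log N)^\gamma$ points; the paper takes this as immediate.
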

 We prove Theorem~\ref{main} in Section~\ref{mainthm}.
 \begin{rem}
 By \cite[Corollary 1.9]{complexity},  finding  a minimal lift of a generic point $\vec{a}\in S^d(\mathbb \mathbb{Z}/q\mathbb{Z})$ is essentially NP-complete. Moreover,  Theorem~\ref{main} generalizes the lifting algorithm  for the $\mathbb{Z}/q\mathbb{Z}$ points of $S^1\subset S^3$ \cite[Theorem 1.10]{complexity}  to $S^{d-2}\subset S^d$ for any $d\geq 3.$ 
 \end{rem}
 The main observation of this paper links  $w_p(\vec{a})$ to another invariant associated to $\vec{a}\in S^{d-2}(\mathbb \mathbb{Z}/q\mathbb{Z})\subset S^d(\mathbb \mathbb{Z}/q\mathbb{Z}),$ which we describe next. Suppose that $\vec{a}=(a_0,\dots,a_{d-2},0,0).$ Let $\mathcal L(\vec{a})$ be the following sub-lattice of $\mathbb{Z}^{d-1}$ with co-volume $q$:
\begin{equation}\label{L(a)}
\mathcal L(\vec{a}):=\big\{(x_0,\dots,x_{d-2})\in \mathbb{Z}^{d-1}:x_0a_0+\dots+x_{d-2}a_{d-2}\equiv 0~\mathrm{ mod }~q   \big\}.
\end{equation}
For any $\mathbb{Z}$ basis \(B = \{\vec v_1,\dots,\vec v_{d-1}\}\) of \(\mathcal L(\vec{a})\),
let 
\begin{equation}\label{MB}
M(B) = \max\{|\vec v_1|,\dots,|\vec v_{d-1}|\},
\end{equation} where \(|\vec{v}|\) is the Euclidean norm of  $\vec{v}$. Define the height function 
$$
\eta(\vec{a}):=\log_q \min_{B} M(B),
$$
where $B$ varies among all $\mathbb{Z}$ basis of \(\mathcal L(\vec{a})\). We prove that $\eta(\vec{a})$ is computable in polynomial-time in $\log(q)$ up to an error term of size $O_d(1/\log(q)).$
\begin{thm}\label{algorithm}
Fix $d\geq 3.$ We develop  a  deterministic  polynomial-time algorithm  in $\log(q)$,   that on input $\vec{a}\in S^{d-2}(\mathbb \mathbb{Z}/q\mathbb{Z})\subset S^d(\mathbb \mathbb{Z}/q\mathbb{Z})$  returns $\eta(\vec{a})+O_d(1/\log(q))$.
\end{thm}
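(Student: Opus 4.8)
The plan is to reduce the computation of $\eta(\vec a)$ to LLL basis reduction in the fixed dimension $d-1$, using that passing to $\log_q$ converts a multiplicative constant depending only on $d$ into an additive error of size $O_d(1/\log q)$. First I would build an explicit $\mathbb{Z}$-basis of $\mathcal L(\vec a)$. Since $a_0^2+\dots+a_{d-2}^2\equiv 1\pmod q$ and $q$ is prime, some coordinate $a_i$ is a unit modulo $q$, so after permuting the coordinates we may assume $\gcd(a_0,q)=1$ and compute $a_0^{-1}\bmod q$ by the Euclidean algorithm. Then $\mathcal L(\vec a)$ is spanned by $q\vec e_0$ together with the vectors $\vec e_i - c_i\vec e_0$ for $1\le i\le d-2$, where $c_i\in\{0,\dots,q-1\}$ and $c_i\equiv a_0^{-1}a_i\pmod q$; the $(d-1)\times(d-1)$ matrix of these $d-1$ vectors has determinant $q$, so since $\mathcal L(\vec a)$ has covolume $q$ they form a basis. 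This costs time polynomial in $\log q$.

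Next I would run LLL on this basis to obtain an LLL-reduced basis $\vec b_1,\dots,\vec b_{d-1}$ of $\mathcal L(\vec a)$. The input entries are at most $q$ and the rank $d-1$ is fixed, so LLL terminates in time polynomial in $\log q$ and its output vectors have integer entries of size $q^{O_d(1)}$; in particular $\mu:=\max_{1\le j\le d-1}|\vec b_j|^2$ is a positive integer of size $q^{O_d(1)}$. The algorithm returns $\tfrac12\log_q\mu$, evaluated to precision $(\log q)^{-2}$, which is again polynomial in $\log q$.

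For correctness it suffices to show $\bigl|\tfrac12\log_q\mu-\eta(\vec a)\bigr|=O_d(1/\log q)$. Let $\lambda_{d-1}$ be the last successive minimum of $\mathcal L(\vec a)$. Every $\mathbb{Z}$-basis of $\mathcal L(\vec a)$ is a set of $d-1$ linearly independent lattice vectors, so $M(B)\ge\lambda_{d-1}$ for every basis $B$; since $\{\vec b_j\}$ is a basis, this gives $\lambda_{d-1}\le\min_B M(B)\le\sqrt{\mu}$. Conversely, the standard LLL quality bound gives $|\vec b_j|\le 2^{(d-2)/2}\lambda_j\le 2^{(d-2)/2}\lambda_{d-1}$ for every $j$, so $\sqrt{\mu}\le 2^{(d-2)/2}\lambda_{d-1}\le 2^{(d-2)/2}\min_B M(B)$. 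Hence $2^{-(d-2)/2}\sqrt{\mu}\le\min_B M(B)\le\sqrt{\mu}$, and applying $\log_q$ gives $\bigl|\tfrac12\log_q\mu-\eta(\vec a)\bigr|\le\tfrac{(d-2)\log 2}{2\log q}=O_d(1/\log q)$; the additional $(\log q)^{-2}$ rounding error is absorbed into this bound.

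The argument is almost entirely bookkeeping, and the one point that needs care is the last step: the $d$-dependent loss in the LLL quality bound — equivalently, the gap between $\min_B M(B)$ and $\lambda_{d-1}$, which is itself bounded purely in terms of $d$ by classical basis-reduction results in the geometry of numbers (Mahler, van der Corput) — must be recognized as harmless, since it becomes an $O_d(1/\log q)$ additive error once $\log_q$ is applied. If one wanted $\eta(\vec a)$ exactly rather than up to $O_d(1/\log q)$, LLL could be replaced by an exact successive-minima or Minkowski-reduction computation, which is still polynomial in $\log q$ for the fixed dimension $d-1$, but this refinement is not needed for the statement as phrased.
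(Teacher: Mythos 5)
Your proposal is correct and follows essentially the same approach as the paper: construct the explicit basis $\{q\vec e_0,\ \vec e_i - (a_0^{-1}a_i)\vec e_0\}$ of $\mathcal L(\vec a)$, run LLL in the fixed dimension $d-1$, and observe that the multiplicative LLL quality factor $2^{(d-2)/2}$ becomes an additive $O_d(1/\log q)$ error after taking $\log_q$. The only cosmetic difference is that you derive the quality bound through successive minima, whereas the paper cites it directly as a proposition from the LLL reference; the content is the same.
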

   
We implemented  the algorithms in Theorem~\ref{main} and Theorem~\ref{algorithm} for $d=4$ \cite{gitlab}.  Figure~\ref{linear}  illustrates our main observation, which links the diophantine exponent $w_p(\vec{a})$ to the height function $\eta(\vec{a})$.  
%
\begin{figure}[H]
\centering
\raisebox{-0.6cm}{
\includegraphics[width=\textwidth]{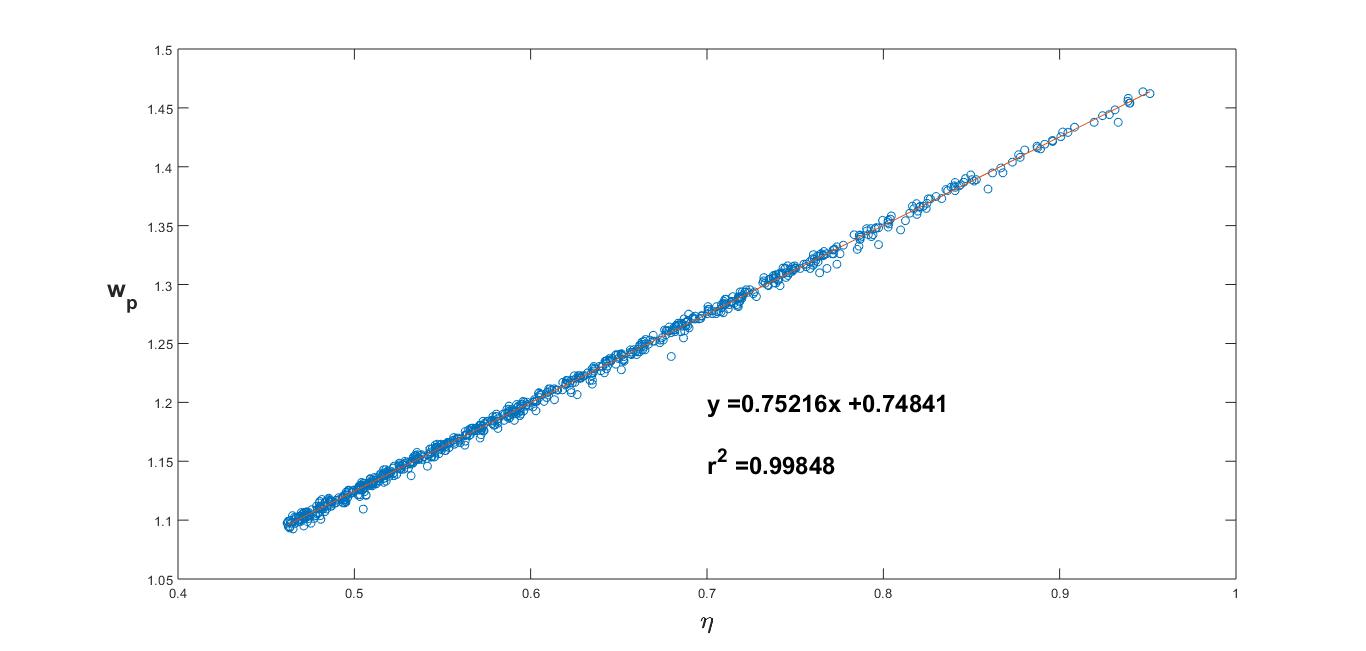}
}
\caption{Random Coordinates}
\label{linear}
\end{figure}
We graph $w_p(\vec{a})$ against  $\eta(\vec{a})$ for $\vec{a}$ chosen randomly on a logarithmic scale and eight $130$-digit values of $q$, as described in Section~\ref{numeric}.  Figure~\ref{linear} suggests the following linear relation between $w_p(\vec{a})$ and $\eta(\vec{a})$ 
\begin{equation}\label{expiden}
w_p(\vec{a}) = \frac{3}{4}(1 + \eta(\vec{a})) + o_q(1).
\end{equation}
We give further numerical evidences that supports the above relation in Section~\ref{numeric}.
Moreover, we prove the following theorem in Section~\ref{mainthm}.
\begin{thm}\label{dioexpthm}
Assume $d\geq 3,$ Conjecture~\ref{conj}, and $p\leq \log(q)$. We have
\[
w_p(\vec{a}) \leq \frac{d-1}{d}(1 + \eta(\vec{a})) + O(\log\log(q)/\log(q)),\]
where the implicit constant in $O(\log\log(q)/\log(q))$ only depends on $\gamma$ and $C_{\gamma}$ defined  in Conjecture~\ref{conj} and it is independent of $p$, $q,$ and $\vec{a}.$ 
\end{thm}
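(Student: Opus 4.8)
The plan is to produce, for each $\vec{a}=(a_0,\dots,a_{d-2},0,0)\in S^{d-2}(\mathbb{Z}/q\mathbb{Z})\subset S^d(\mathbb{Z}/q\mathbb{Z})$, an explicit lift $\vec{s}\in S^d(\mathbb{Z}[1/p])$ with $H(\vec{s})\le q^{1+\eta(\vec{a})+O(\log\log q/\log q)}$; multiplying by $\tfrac{d-1}{d}$ then yields the asserted bound on $w_p(\vec{a})$. After clearing denominators it suffices to find integers $m_0,\dots,m_d$ with $\sum_{i=0}^d m_i^2=p^{2h}$, with $m_i\equiv p^h a_i\bmod q$ for $0\le i\le d-2$, and $m_{d-1}\equiv m_d\equiv 0\bmod q$: then $\vec{s}:=(m_0,\dots,m_d)/p^h$ is a lift of height $\le p^h$, using that $p$ is invertible mod $q$. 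I would fix integers $b_i\in\{0,\dots,q-1\}$ with $b_i\equiv p^h a_i\bmod q$ and set $N:=p^{2h}$; since $a_0^2+\dots+a_{d-2}^2\equiv 1\bmod q$ we get $N\equiv\sum b_i^2\bmod q$ and $\gcd(N,q)=1$, so $Q(\vec{t})=\frac{N}{q^2}-\sum_{i=0}^{d-2}(t_i+b_i/q)^2$ is of the type in Conjecture~\ref{conj}. Writing $m_i=qt_i+b_i$, the condition $Q(\vec{t})\in\mathbb{Z}$ is exactly $\sum_{i=0}^{d-2}m_i^2\equiv p^{2h}\bmod q^2$, the condition $Q(\vec{t})\ge 0$ is $\sum_{i=0}^{d-2}m_i^2\le p^{2h}$, and if $Q(\vec{t})=u^2+v^2$ then setting $m_{d-1}=qu$, $m_d=qv$ gives the required integers. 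So the whole problem reduces to applying Conjecture~\ref{conj} for a suitable radius $r$, i.e. to a lower bound on $|A_{Q,r}|$.

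The mechanism linking $|A_{Q,r}|$ to $\eta(\vec{a})$ is this: the set of $\vec{m}\in\mathbb{Z}^{d-1}$ with $\vec{m}\equiv\vec{b}\bmod q$ and $\sum m_i^2\equiv p^{2h}\bmod q^2$ is a coset of the lattice $q\mathcal{L}(\vec{a})$. Indeed for $\vec{z}\in\mathcal{L}(\vec{a})$ one has $\sum m_i z_i\equiv p^h\sum a_i z_i\equiv 0\bmod q$, hence $\sum(m_i+qz_i)^2\equiv\sum m_i^2\bmod q^2$, and the converse is the same computation in reverse. This coset is nonempty by a one‑variable congruence: since $\sum b_i^2\equiv p^{2h}\not\equiv 0\bmod q$ some $b_{i_0}$ is a unit mod $q$, and adjusting $b_{i_0}$ by a multiple of $q$ solves $\sum m_i^2\equiv p^{2h}\bmod q^2$ (here $q$ is odd; the case $q=2$ is trivial). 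Now $q\mathcal{L}(\vec{a})$ has rank $d-1$, covolume $q^d$, and, by the very definition of $\eta(\vec{a})$, a $\mathbb{Z}$‑basis all of whose vectors have norm $\le q^{1+\eta(\vec{a})}$. Choosing $r$ slightly larger than $p^h/q$ so that $|\vec{m}|\le p^h$ forces $|\vec{t}|<r$, and fixing one solution $\vec{m}^*$, one gets
\[
|A_{Q,r}|\ \ge\ \#\big((\vec{m}^*+q\mathcal{L}(\vec{a}))\cap\{\vec{m}:|\vec{m}|\le p^h\}\big)\ \gg_d\ (p^h)^{d-1}/q^d
\]
as soon as $p^h\ge C_d\,q^{1+\eta(\vec{a})}$, by the standard covering argument with a fundamental parallelepiped of diameter $\ll_d q^{1+\eta(\vec{a})}$.

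I would then take $h$ to be the least integer with $p^h\ge q^{1+\eta(\vec{a})+\delta}$, where $\delta:=C\log\log q/\log q$ for a constant $C=C(\gamma,d)$ to be fixed; since $p\le\log q$, consecutive powers of $p$ differ by a factor $\le\log q$, so $q^{1+\eta(\vec{a})+\delta}\le p^h\le q^{1+\eta(\vec{a})+\delta}\log q$, in particular $\log N=2\log(p^h)\asymp\log q$ and $p^h\ge C_d q^{1+\eta(\vec{a})}$ for $q$ large. Then the display above gives $|A_{Q,r}|\gg_d q^{(d-1)(1+\eta(\vec{a})+\delta)-d}=q^{(d-1)\eta(\vec{a})-1}\cdot q^{(d-1)\delta}$. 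By Minkowski's second theorem applied to $\mathcal{L}(\vec{a})$ (covolume $q$), its largest successive minimum satisfies $\lambda_{d-1}(\mathcal{L}(\vec{a}))\gg_d q^{1/(d-1)}$, and any $\mathbb{Z}$‑basis has $d-1$ independent vectors so $q^{\eta(\vec{a})}=\min_B M(B)\ge\lambda_{d-1}(\mathcal{L}(\vec{a}))\gg_d q^{1/(d-1)}$; hence $(d-1)\eta(\vec{a})\ge 1-O_d(1/\log q)$ and $q^{(d-1)\eta(\vec{a})-1}\gg_d 1$. Therefore $|A_{Q,r}|\gg_d q^{(d-1)\delta}=(\log q)^{(d-1)C}$, which for $(d-1)C>\gamma$ exceeds $C_\gamma(\log N)^\gamma$ once $q$ is large in terms of $\gamma,C_\gamma$ (and the fixed $d$); for the finitely many smaller $q$ the claimed inequality is trivial after enlarging the implied constant. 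Conjecture~\ref{conj} now produces $\vec{t}\in A_{Q,r}$ with $Q(\vec{t})$ a sum of two squares, hence a lift with $H(\vec{s})\le p^h\le q^{1+\eta(\vec{a})+O(\log\log q/\log q)}$, and $w_p(\vec{a})=\tfrac{d-1}{d}\log_q H(\vec{s})\le\tfrac{d-1}{d}(1+\eta(\vec{a}))+O(\log\log q/\log q)$, with implied constant depending only on $\gamma$ and $C_\gamma$.

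The main obstacle is exactly this last balancing act. Conjecture~\ref{conj} only applies once $|A_{Q,r}|$ beats the barely‑superconstant barrier $C_\gamma(\log N)^\gamma$, whereas the naive volume heuristic at the critical scale $p^h\asymp q^{1+\eta(\vec{a})}$ only gives $|A_{Q,r}|\asymp q^{(d-1)\eta(\vec{a})-1}$, which can itself be of constant size. Pushing past this forces two ingredients used above: the elementary lower bound $\eta(\vec{a})\ge\tfrac{1}{d-1}-o(1)$ from Minkowski's second theorem, and an extra slack $\delta\asymp\log\log q/\log q$ in the exponent of the height; and it is precisely in order to turn that slack, after rounding $q^{1+\eta(\vec{a})+\delta}$ up to a power of $p$, into an error of only $O(\log\log q/\log q)$ that one needs the hypothesis $p\le\log q$.
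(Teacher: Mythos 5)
Your argument is correct and reaches the same conclusion as the paper, but it produces the lower bound on $|A_{Q,r}|$ by a genuinely different mechanism. The paper routes the bound through Proposition~\ref{intlift}: it works in the coordinates of a $3/4$-LLL reduced basis $\{\vec u_0,\dots,\vec u_{d-2}\}$ of $\mathcal L(\vec{a})$, invokes Babai's theorem on the angles of a reduced basis to prove Lemma~\ref{boxlem}, and thereby exhibits an explicit axis-aligned box $C$ (in LLL coordinates) on which $Q\ge 0$, with each side already of length $\gg C_\gamma(\log N)^\gamma$ once $\sqrt N/q\gg |\vec u_i|C_\gamma(\log N)^\gamma$; the product of sides then trivially dwarfs $C_\gamma(\log N)^\gamma$. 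You instead observe directly that $\{\vec m: \vec m\equiv\vec b\bmod q,\ \sum m_i^2\equiv p^{2h}\bmod q^2\}$ is a nonempty coset of $q\mathcal L(\vec{a})$ and count its intersection with the ball $|\vec m|\le p^h$ by a fundamental-domain covering argument, getting $\gg_d (p^h)^{d-1}/q^d$. Because that bound can be $O(1)$ at the critical scale, you need the extra input $\eta(\vec{a})\ge 1/(d-1)$ (Minkowski's second theorem, or simply Hadamard: $q\le\prod|\vec v_i|\le M(B)^{d-1}$ for any basis $B$) together with the $\delta\asymp\log\log q/\log q$ slack to push past $C_\gamma(\log N)^\gamma$. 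The paper's box construction avoids appealing to this lower bound on $\eta$ explicitly (since each $A_i$ is individually large), but it pays for that with the LLL/Babai machinery; your covering argument is more elementary and does not depend on the reduced basis being nearly orthogonal, at the cost of the extra Minkowski step. Both proofs handle the rounding of $h$ to an integer power of $p$ in the same way, and both use $p\le\log q$ precisely to turn that rounding into an $O(\log\log q/\log q)$ error. One small point worth noting: the implied constant in your final bound, like the paper's, also depends on the fixed $d$ (through the covering constant $C_d$ and the Minkowski constant), not literally only on $\gamma,C_\gamma$; the paper's own closing display writes $O_d(\cdot)$, so this matches.
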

Based on our numerical results and Theorem~\ref{dioexpthm}, we conjecture the following optimal upper bound on $w_p(\vec{a})$ for every $\vec{a}\in S^{d-2}(\mathbb{Z}/q\mathbb{Z})\subset S^d(\mathbb{Z}/q\mathbb{Z}).$
\begin{conj}\label{mainconj}
Let $\vec{a}\in S^{d-2}(\mathbb{Z}/q\mathbb{Z})\subset S^d(\mathbb{Z}/q\mathbb{Z})$, $p\leq \log(q)$ be a prime number and $d\geq 3$. We have 
\begin{equation}\label{optexp}
w_p(\vec{a}) \leq \frac{d-1}{d}(1 + \eta(\vec{a})) + O(\log\log(q)/\log(q)),
\end{equation}
where the implicit constant in $O(\log\log(q)/\log(q))$ only depends on $d$  and it is independent of $p$, $q,$ and $\vec{a}.$
\end{conj}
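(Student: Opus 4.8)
The plan is to prove the bound $w_p(\vec{a})\le\tfrac{d-1}{d}(1+\eta(\vec{a}))+O(\log\log q/\log q)$ by exhibiting, for a given $\vec{a}=(a_0,\dots,a_{d-2},0,0)\in S^{d-2}(\mathbb{Z}/q\mathbb{Z})$, one lift $\vec{s}\in S^d(\mathbb{Z}[1/p])$ of $\vec{a}$ with $\log_q H(\vec{s})\le 1+\eta(\vec{a})+O(\log\log q/\log q)$; since $w_p(\vec{a})=\tfrac{d-1}{d}\min_{\vec{s}}\log_q H(\vec{s})$, multiplying by $\tfrac{d-1}{d}$ then gives the claim. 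I would look for $\vec{s}=(n_0/p^h,\dots,n_d/p^h)$ with $n_0^2+\cdots+n_d^2=p^{2h}=:N$, subject to the congruences forced by $\vec{s}\equiv\vec{a}\bmod q$: fix integers $b_i\equiv p^ha_i\bmod q$ for $0\le i\le d-2$, write $n_i=qt_i+b_i$ for those coordinates, and $n_{d-1}=qu$, $n_d=qv$. Expanding shows that $\sum_i n_i^2=N$ is equivalent to $u^2+v^2=Q(\vec{t})$ with $Q$ exactly the polynomial of Conjecture~\ref{conj} attached to $N$, $b_0,\dots,b_{d-2}$, $q$ (here $N\equiv b_0^2+\cdots+b_{d-2}^2\bmod q$ because $\sum_i a_i^2\equiv1$, and $\gcd(N,q)=1$ because $p\neq q$). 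Thus it suffices to find $\vec{t}$ with $Q(\vec{t})$ a nonnegative integer that is a sum of two squares, and with $|\vec{t}|$ not much larger than $q^{\eta(\vec{a})}$.

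Next I would read off the geometry of $A_{Q,r}$. Clearing denominators in $Q(\vec{t})\in\mathbb{Z}$ and using that $q$ is odd turns this condition into a single linear congruence $\sum_i t_ib_i\equiv c\bmod q$, whose solution set is a coset of $\{\vec{t}:\sum_i t_ib_i\equiv0\bmod q\}$; this last lattice equals $\mathcal{L}(\vec{a})$ because $p^h$ is a unit mod $q$. Meanwhile $Q(\vec{t})\ge0$ says exactly that $\vec{t}$ lies in the Euclidean ball $B$ of radius $\rho:=\sqrt{N}/q=p^h/q$ centered at $-(b_0/q,\dots,b_{d-2}/q)$. So, taking $r$ just large enough that $B\subseteq\{|\vec{t}|<r\}$, we have $A_{Q,r}=(v_0+\mathcal{L}(\vec{a}))\cap B$ for some $v_0$. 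Let $\mu$ be the covering radius of $\mathcal{L}(\vec{a})$. The bound $\mu\le\tfrac{d-1}{2}M(B^\ast)$ for the optimal basis $B^\ast$ gives $\mu\le\tfrac{d-1}{2}q^{\eta(\vec{a})}$, while Minkowski's second theorem gives $q^{\eta(\vec{a})}\ge\lambda_{d-1}(\mathcal{L}(\vec{a}))\gg_d q^{1/(d-1)}$. A Voronoi-cell covering argument then yields $|A_{Q,r}|\ge\mathrm{vol}(B^{d-1}_{\rho-\mu})/q$ whenever $\rho>\mu$, where $B^{d-1}_t$ is a ball of radius $t$ in $\mathbb{R}^{d-1}$.

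To conclude, set $\rho_0:=\max\!\big((d-1)q^{\eta(\vec{a})},\,Kq^{1/(d-1)}(\log q)^{\gamma/(d-1)}\big)$ for a large constant $K=K(d,\gamma,C_\gamma)$, and let $h$ be minimal with $p^h\ge q\rho_0$, so $\rho=p^h/q\in[\rho_0,p\rho_0)$. The first term of $\rho_0$ forces $\rho\ge2\mu$, hence $\rho-\mu\ge\rho/2$ and $|A_{Q,r}|\ge V_{d-1}(\rho/2)^{d-1}/q$; since $N$ is polynomially bounded in $q$ one has $(\log N)^\gamma\ll_{\gamma}(\log q)^\gamma$, so the second term of $\rho_0$ makes $V_{d-1}(\rho/2)^{d-1}/q>C_\gamma(\log N)^\gamma$. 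Conjecture~\ref{conj} then supplies $\vec{t}\in A_{Q,r}$ with $Q(\vec{t})=u^2+v^2$, and hence the lift $\vec{s}=\big(\tfrac{qt_0+b_0}{p^h},\dots,\tfrac{qt_{d-2}+b_{d-2}}{p^h},\tfrac{qu}{p^h},\tfrac{qv}{p^h}\big)$, whose height is at most $p^h$ (reducing the coordinates to lowest terms only decreases it). Finally $q^{1/(d-1)}\ll_d q^{\eta(\vec{a})}$ gives $\rho_0\ll_d(\log q)^{\gamma/(d-1)}q^{\eta(\vec{a})}$, and $p\le\log q$ gives $p^h=q\rho<pq\rho_0\ll_{d,\gamma,C_\gamma}(\log q)^{1+\gamma/(d-1)}q^{1+\eta(\vec{a})}$, whence $\log_q H(\vec{s})\le\log_q p^h\le1+\eta(\vec{a})+O(\log\log q/\log q)$ with the implied constant depending only on $d$, $\gamma$, $C_\gamma$. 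The main obstacle is making the lattice-point count of the middle step quantitative enough that the covering radius of $\mathcal{L}(\vec{a})$ genuinely governs the conjectural threshold $C_\gamma(\log N)^\gamma$, so that $|\vec{t}|$ of size $q^{\eta(\vec{a})}$ up to a $(\log q)^{O(1)}$ factor already works; the remaining steps are bookkeeping, and the hypothesis $p\le\log q$ is used only to absorb the multiplicative gap between consecutive admissible heights $p^h$ into the error term.
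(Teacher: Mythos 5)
The statement you were asked to prove, Conjecture~\ref{mainconj}, is not proved in the paper: it is put forward as a conjecture precisely because the paper only knows how to establish the \emph{conditional} version, Theorem~\ref{dioexpthm}, which assumes Conjecture~\ref{conj} and whose implied constant depends on the constants $\gamma$ and $C_\gamma$ appearing there. Your argument is a correct proof of Theorem~\ref{dioexpthm}, not of Conjecture~\ref{mainconj}: you invoke Conjecture~\ref{conj} to pass from the lattice-point count in $A_{Q,r}$ to an actual representation of some $Q(\vec{t})$ as a sum of two squares, and your final constant carries the $(\gamma,C_\gamma)$-dependence (you say so explicitly). There is no defect in the reasoning, but you should be clear that what you have proved is the conditional theorem; upgrading its constant to depend on $d$ alone, unconditionally, is exactly what the paper leaves open as Conjecture~\ref{mainconj}.

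Granting that your actual target is Theorem~\ref{dioexpthm}, your route differs somewhat from the paper's. The paper chooses $h$ explicitly so that $\sqrt{N}/q\ge 2d\,|\vec u_i|\,C_\gamma(\log N)^\gamma$ holds for every vector of a $3/4$-LLL reduced basis $\{\vec u_0,\dots,\vec u_{d-2}\}$ of $\mathcal{L}(\vec{a})$, then invokes the proof of Proposition~\ref{intlift}, where Lemma~\ref{boxlem} inscribes a box $C$ in the ball (measured in LLL coordinates) and Babai's angle bound controls the spill-out region. You instead count points of the shifted lattice in the ball of radius $\rho=p^h/q$ directly: the covering radius $\mu$ of $\mathcal{L}(\vec{a})$ satisfies $\mu\le\tfrac{d-1}{2}M(B^*)=\tfrac{d-1}{2}q^{\eta(\vec{a})}$, Minkowski's second theorem gives $q^{\eta(\vec{a})}\ge\lambda_{d-1}\gg_d q^{1/(d-1)}$, and a Voronoi-cell covering gives $|A_{Q,r}|\ge\mathrm{vol}(B_{\rho-\mu})/q$. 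That is a cleaner, more intrinsic lattice-geometry argument and avoids the LLL-specific lemmas; the trade-off is that the paper's formulation doubles as a polynomial-time algorithm (the LLL basis is what the implementation actually computes), whereas the covering radius is not an efficiently computable quantity. Both proofs use $p\le\log q$ only to absorb the multiplicative gap between consecutive powers of $p$ into the $O(\log\log q/\log q)$ error, and both culminate in the same appeal to Conjecture~\ref{conj}.
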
 
\begin{rem}
By Theorem~\ref{main} and Theorem~\ref{algorithm}, $w_p(\vec{a})$ and $\eta(\vec{a})$ are computable in polynomial-time in $\log(q).$ Our algorithm for $d=4$~\cite{gitlab} has been implemented and it runs and terminates quickly for  $q\sim 10^{100}.$ We verify Conjecture~\ref{mainconj} for various values of $q$ and $\vec{a}$ in Section 3.    

\end{rem}
We expect that the upper bound~\eqref{optexp} to be sharp for a generic $\vec{a}\in S^{d-2}(\mathbb{Z}/q\mathbb{Z})\subset S^d(\mathbb{Z}/q\mathbb{Z})$ and prime $p\leq \log(q).$ More precisely, we expect that 
\begin{equation}\label{idenmain}
w_p(\vec{a}) = \frac{d-1}{d}(1 + \eta(\vec{a})) + O(\log\log(q)/\log(q))\end{equation}
for fixed $\vec{a}\in S^{d-2}(\mathbb \mathbb{Z}/q\mathbb{Z})\subset S^d(\mathbb \mathbb{Z}/q\mathbb{Z}),$ and all but tiny fractions of primes $1 \leq p \leq \log(q).$ Moreover, by the equidistribution of covolume-1 lattices $\frac{1}{q^{1/(d-1)}}\mathcal L(\vec{a})$ in the space of the unimodular lattices, for  all but a tiny fractions of $\vec{a}\in S^2(\mathbb \mathbb{Z}/q\mathbb{Z})\subset S^4(\mathbb \mathbb{Z}/q\mathbb{Z}),$ we have $\eta(\vec{a})=1/(d-1)+O(\log\log(q)/\log(q)).$ It is also conjectured for $d\geq 3$ that $w_p(\vec{a})=1+O(\log\log(q)/\log(q))$ for all but a tiny fractions of $\vec{a}\in S^{d-2}(\mathbb \mathbb{Z}/q\mathbb{Z})\subset S^d(\mathbb \mathbb{Z}/q\mathbb{Z}).$  Hence,  the identity~\eqref{idenmain} holds for a generic choice of parameters. Note that $\frac{1}{d-1}\leq \eta(a)\leq 1.$ Hence, we expect that the diophantine exponent $w_p(\vec{a})$ to be dense in the interval $[1,2-2/d]$ as $q\to \infty$.  We give strong numerical evidence for this in Section~\ref{numeric}.
\subsection{Outline of the proofs}
We give an outline of the proof of  Theorem~\ref{main}. The proof is based on induction on $d.$ The base case $d=3$ was essentially  proved in the previous work of the third author  \cite[Theorem 1.10]{complexity}. 
Our algorithm starts with searching for the lattice points of $\mathcal{L}(\vec{a})$ inside a convex region defined by the intersection of two balls. There is a similar step in  the work of Ross and Selinger \cite{Selinger1}. Sarnak and Ori \cite{Parzanchevski} explained this step  in terms of Lenstra's work~\cite{Lenstra}.  If the convex region is defined by a system of linear inequalities in a fixed dimension then the general result of Lenstra \cite{Lenstra} implies this search is polynomially solvable. We use a variant of Lenstra's argument  that is developed in~\cite[Theorem 1.10]{complexity} and Conjecure~\ref{conj} to reduce the problem to dimension $d-1$.  At the final stage of our algorithm, we need to represent a given integers $m$ as a sum of two squares if it is possible. We apply Pollard's rho algorithm to factor $m$ into primes, and  check if all the prime factors  with the odd exponent are congruent to $1$ mod 4.  Finally, we use Schoof's algorithm~\cite{Schoof} to express each prime divisor  \(p \equiv 1~\mathrm{ mod }~ 4\) as a sum of two squares. An important feature of our algorithm is that it has been implemented for $d=4$~\cite{gitlab} and $d=3$ \cite{gitlab1}, and it runs and terminates quickly.
%
%
%

%
%
%
%
%

\subsection*{Acknowledgements}\noindent
We thank Brandon
Boggess for his help for implementing  the code of Theorem~\ref{main}.
We also thank Professor Peter Selinger for publicly providing a very
useful Haskell package (newsynth) which was used in our code.

\section{Proof of Theorem~\ref{main} and \ref{algorithm}\label{mainthm}}
\subsection{$\delta$-LLL reduced basis} In this section we define a $\delta$-LLL reduced basis of $\mathbb{R}^d,$ and give a proof of Theorem~\ref{algorithm}.  We cite a theorem due to Babai on the shape of the LLL-reduced basis.  We refer the reader to \cite[Section 1]{LLL} for a detailed discussion of the LLL-algorithm. We first recall the Gram-Schmidt process.
\begin{defi}
Let $\vec{v}_1,\dots,\vec{v}_k$ be $k$ linearly independent  vectors in $\mathbb{R}^n.$ The Gram-Schmidt orthogonalization of  $\vec{v}_1,\dots,\vec{v}_k$
is defined inductively  by $\tilde{\vec{v}}_i=\vec{v}_i-\sum_{j=1}^{i-1}\mu_{i,j}\tilde{\vec{v}}_{j},$ where $\mu_{i,j}:=\frac{\langle \vec{v}_i,\tilde{\vec{v}}_j \rangle}{\langle \tilde{\vec{v}}_j,\tilde{\vec{v}}_j  \rangle}.$
\end{defi}
Next, we define a $\delta$-LLL reduced basis of $\mathbb{R}^d$ for any $1/4<\delta<1.$
\begin{defi}
A basis $\{\vec{v}_1,\dots,\vec{v}_d    \}\subset\mathbb{R}^d$ is a $\delta$-LLL reduced basis if the following holds:
\begin{enumerate}
\item  $|\mu_{i,j}|\leq 1/2$, for every $1 \leq  i \leq n$, and  $ j<i,$
\item  $\delta |\tilde{\vec{v}}_i|^2 \leq |\mu_{i+1,i} \tilde{\vec{v}}_i + \tilde{\vec{v}}_{i+1}|^2$  for for every $1 \leq  i < n.$ 
\end{enumerate}
\end{defi}
\begin{rem}\label{remlll}
By~\cite[Proposition 1.26]{LLL}, the LLL-algorithm transforms a given basis $B$ of a lattice $\mathcal L \subset \mathbb{Z}^d$ in $O(d^4\log(M(B)))$ operatins into a $\delta$-LLL reduced basis of $\mathbb{R}^d$, where $M(B)$ is defined in  \eqref{MB}.
\end{rem}
We cite the following theorem from \cite[Theorem 5.1]{Babai}.
\begin{thm}[Babai]\label{babai}
Let $\{\vec{v}_1,\dots,\vec{v}_d \}$ be a $\delta-$LLL reduced basis with $\delta=3/4.$ Let $\theta_k$ denote the angle between $\vec{v}_k$ and the linear subspace $U_k=\sum_{j\neq k}\mathbb{R}\vec{v}_j.$ Then, for every $1\leq k\leq d,$ 
\[ \sin \theta_k \geq (\frac{\sqrt{2}}{3})^d.\] 
\end{thm}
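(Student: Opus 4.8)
The plan is to translate the statement into a bound on the Gram matrix of the basis and then extract everything from the two defining inequalities of a $3/4$-LLL reduced basis. From the size condition we use $|\mu_{i,j}|\le 1/2$ for $j<i$; from the Lov\'asz condition, together with $\mu_{i+1,i}^2\le 1/4$, we get $|\tilde{\vec{v}}_i|^2\le 2\,|\tilde{\vec{v}}_{i+1}|^2$, hence $|\tilde{\vec{v}}_i|^2\le 2^{\,j-i}|\tilde{\vec{v}}_j|^2$ for all $i\le j$. The starting point is the classical identity $\sin^2\theta_k=\bigl(G_{kk}(G^{-1})_{kk}\bigr)^{-1}$, where $G=(\langle\vec{v}_i,\vec{v}_j\rangle)_{i,j}$: indeed $\operatorname{dist}(\vec{v}_k,U_k)^2=\det G/\det G_{\hat k}=(G^{-1})_{kk}^{-1}$ by Cramer's rule, with $G_{\hat k}$ denoting $G$ with its $k$-th row and column deleted, while $|\vec{v}_k|^2=G_{kk}$ and $\sin\theta_k=\operatorname{dist}(\vec{v}_k,U_k)/|\vec{v}_k|$.

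Next I would factor $G$ through the Gram--Schmidt data. With $M=(\mu_{i,j})$ the unit lower-triangular matrix of Gram--Schmidt coefficients ($\mu_{i,i}:=1$) and $D=\operatorname{diag}(|\tilde{\vec{v}}_1|^2,\dots,|\tilde{\vec{v}}_d|^2)$, the relation $\vec{v}_i=\sum_j\mu_{i,j}\tilde{\vec{v}}_j$ gives $G=MDM^{T}$, so $G^{-1}=M^{-T}D^{-1}M^{-1}$. Reading off diagonal entries (the sums truncate because $M$ and $M^{-1}$ are lower triangular), $G_{kk}=\sum_{j\le k}\mu_{k,j}^2\,|\tilde{\vec{v}}_j|^2$ and $(G^{-1})_{kk}=\sum_{j\ge k}(M^{-1})_{jk}^2\,|\tilde{\vec{v}}_j|^{-2}$.

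Then I would estimate the two diagonal entries separately. Using $|\mu_{k,j}|\le 1/2$ and $|\tilde{\vec{v}}_j|^2\le 2^{\,k-j}|\tilde{\vec{v}}_k|^2$ for $j<k$ gives $G_{kk}\le |\tilde{\vec{v}}_k|^2\bigl(1+\tfrac14\sum_{j<k}2^{\,k-j}\bigr)\le 2^{k-1}|\tilde{\vec{v}}_k|^2$. For $(G^{-1})_{kk}$, the crucial input is a bound on the off-diagonal entries of $M^{-1}$: writing $M=I+L$ with $L$ strictly lower triangular and $|L_{ij}|\le 1/2$, the $(j,k)$-entry of $(-L)^t$ is a sum of $\binom{j-k-1}{t-1}$ products of $t$ entries of $L$, so $|(M^{-1})_{jk}|\le\sum_{t\ge 1}\binom{j-k-1}{t-1}2^{-t}=\tfrac12(3/2)^{\,j-k-1}$ for $j>k$. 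Combining this with $|\tilde{\vec{v}}_j|^2\ge 2^{\,k-j}|\tilde{\vec{v}}_k|^2$ and summing the resulting geometric series of ratio $9/2$ yields $(G^{-1})_{kk}\le|\tilde{\vec{v}}_k|^{-2}\bigl(1+\tfrac17(9/2)^{\,d-k}\bigr)$. Multiplying the two estimates, $G_{kk}(G^{-1})_{kk}\le 2^{k-1}\bigl(1+\tfrac17(9/2)^{\,d-k}\bigr)<(9/2)^d$ for every $1\le k\le d$ (the first summand contributes at most $\tfrac29(9/2)^d$ and the second at most $\tfrac1{14}(4/9)^k(9/2)^d$), which is exactly $\sin\theta_k>(\sqrt2/3)^d$.

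I expect the estimate on $M^{-1}$ to be the only real obstacle. A naive application of Hadamard's inequality to $\det G_{\hat k}$ — equivalently, the orthogonality-defect bound $\sin\theta_k\ge\prod_i|\tilde{\vec{v}}_i|/\prod_j|\vec{v}_j|$ — only gives $\sin\theta_k\ge 2^{-d(d-1)/4}$, which is exponentially weaker than the target; one genuinely needs $|\mu_{i,j}|\le 1/2$ to see that the Gram--Schmidt coefficients propagate only geometrically through $M^{-1}$ rather than combinatorially. Everything else is bookkeeping with the two LLL inequalities. Alternatively, one can bypass the Gram-matrix identity and minimise $|\vec{v}_k-\sum_{j\ne k}c_j\vec{v}_j|^2$ directly in Gram--Schmidt coordinates — after first projecting away $\vec{v}_1,\dots,\vec{v}_{k-1}$, so that $k$ may be assumed the smallest index — which solves to the same expression $\operatorname{dist}(\vec{v}_k,U_k)^2=|\tilde{\vec{v}}_k|^2/(1+|\tilde{\vec{v}}_k|^2 S)$ and the same geometric-series bound on $S$.
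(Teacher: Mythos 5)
The paper does not prove this statement; it is quoted verbatim from \cite[Theorem 5.1]{Babai}, so there is no in-paper argument to compare against. Your proposal is a correct, self-contained proof. The three pillars all check: the identity $\sin^2\theta_k=\big(G_{kk}(G^{-1})_{kk}\big)^{-1}$ follows from $\operatorname{dist}(\vec v_k,U_k)^2=\det G/\det G_{\hat k}=(G^{-1})_{kk}^{-1}$; the Lov\'asz condition with $\delta=3/4$ and $|\mu_{i+1,i}|\le 1/2$ gives $|\tilde{\vec v}_i|^2\le 2|\tilde{\vec v}_{i+1}|^2$ and hence the two-sided geometric control of the $|\tilde{\vec v}_j|$ used on both diagonal entries; and the genuinely new ingredient, the chain-counting bound $|(M^{-1})_{jk}|\le\sum_{t\ge1}\binom{j-k-1}{t-1}2^{-t}=\tfrac12(3/2)^{j-k-1}$, is correct. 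Combining, $G_{kk}(G^{-1})_{kk}\le 2^{k-1}\big(1+\tfrac17(9/2)^{d-k}\big)$; dividing by $(9/2)^d$ the two contributions are at most $\tfrac12(4/9)^d\le\tfrac29$ and $\tfrac1{14}(4/9)^k\le\tfrac2{63}$, whose sum is $\tfrac{16}{63}<1$, so $\sin\theta_k>(\sqrt2/3)^d$, slightly stronger than the stated inequality. Your closing remark is also apt: the naive orthogonality-defect/Hadamard route only yields $2^{-d(d-1)/4}$, and the size-reduction condition $|\mu_{i,j}|\le1/2$ really is needed to keep the entries of $M^{-1}$ from blowing up combinatorially. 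This is in the spirit of Babai's original argument, which likewise passes through the Gram--Schmidt coordinates of the projection of $\vec v_k$ onto $U_k^{\perp}$.
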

We give a proof of Theorem~\ref{algorithm}.
\begin{proof}
We give an LLL-reduced basis for the lattice $\mathcal L(\vec{a}).$ Assume the $a_0\not\equiv 0~\mathrm{mod}~q.$ Let $|\tilde{a}_0|\leq \frac{q-1}{2}$  be the integer such that  $\tilde{a}_0\equiv (a_0)^{-1}~\mathrm{mod}~q$.  Let $v_0:=(q,0,\dots,0)\in L(\vec{a}),$ and 
\[v_i:=(-\tilde{a}_0a_i, \delta_{1,i},\dots,\delta_{d-2,i}) \]
for $1\leq i \leq d-2,$ where $\delta_{i,j}=1$ if $i=j,$ and $\delta_{i,j}=0$ otherwise. Since the co-volume of $\mathcal L(\vec{a})$ is $q$, it follows that $\{v_0,\dots,v_{d-2}  \}$ is a $\mathbb{Z}$ basis for $\mathcal L(\vec{a}).$ We apply
 the LLL basis reduction
algorithm on $\{v_0,\dots,v_{d-2}  \}$ for $\delta=3/4$ and obtain a 3/4-LLL reduced basis 
\(B_L:=\{\vec u_0,\dots,\vec u_{d-2}\}\) for $\mathcal L(\vec{a})$ in $O(\log(q))$ steps; see Remark~\ref{remlll}. By \cite[Proposition~1.12]{LLL}, we have 
$$
\min_{B} M(B)\leq M(B_L)\leq 2^{(d-2)/2}\min_{B} M(B).
$$
 Hence, 
 $$
 0\leq \log_q(M(B_L))-\eta(\vec{a})\leq \frac{d-2}{2\log_2q}=O_d(1/\log q).
 $$
This concludes the proof of Theorem~\ref{algorithm}.
\end{proof}

\subsection{Proof of Theorem~\ref{main}}\label{section2}
Recall the notations while formulating Theorem~\ref{main}.  Let $\vec{a}=(a_0,\dots,a_{d}),$ where \(a_{d-1}\equiv a_d\equiv 0~\mathrm{mod}~q\). Assume that  \( \vec{s}:=(\frac{n_0}{p^h},\dots,\frac{n_d}{p^h})\in S^d(\mathbb{Z}[1/p]) \) is a minimal lift of $\vec{a},$ where $n_i\in \mathbb{Z}.$ Hence, we have    
\begin{equation*}
\begin{split}
&n_0^2+\dots+n_d^2=p^{2h}, 
\\
&n_i \equiv p^ha_i~\mathrm{ mod }~q \text{ for } 0\leq i\leq d.
\end{split}
\end{equation*} 
More generally, let  $N\leq q^A$ for some fixed $A>0$ be an integer,  and $b_i \in \mathbb{Z}$ for $ 0\leq i\leq d-2,$ where  $\sum_{i=0}^{d-2}b_i^2\equiv N~\mathrm{mod}~q$. Theorem~\ref{main} follows from the 
 following Proposition. 
 \begin{prop}\label{intlift}
 Assume $(*)$ and $d\geq 3$, we develop a polynomial-time algorithm in $\log(q)$ that finds a solution $(t_0,\dots,t_d)\in \mathbb{Z}^{d+1}$, if it exists,  to 
 \begin{equation}\label{ineq}(qt_0 + b_0)^2 + \dots + (qt_{d-2} + b_{d-2})^2 + (qt_{d-1})^2 + (qt_{d})^2 = N.\end{equation}
If there is no integral solution, it terminates in polynomial-time in $\log(q).$
 \end{prop}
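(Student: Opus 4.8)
The plan is to prove Proposition~\ref{intlift} by induction on $d$, the base case $d=3$ being essentially \cite[Theorem~1.10]{complexity}; so I assume $d\geq 4$ and that the algorithm of the Proposition is available in dimension $d-1$. The first step is to put \eqref{ineq} into the language of the geometry of numbers. Writing $x_i=qt_i+b_i$ for $0\leq i\leq d-2$, equation \eqref{ineq} becomes $\sum_{i=0}^{d-2}x_i^2+q^2(t_{d-1}^2+t_d^2)=N$, which, after dividing by $q^2$, is equivalent to $Q(\vec t)=t_{d-1}^2+t_d^2$ for $\vec t=(t_0,\dots,t_{d-2})$ and $Q$ exactly the quadratic polynomial of Conjecture~\ref{conj}. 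Hence \eqref{ineq} has an integral solution if and only if $Q$ represents a sum of two integer squares at some $\vec t\in\mathbb{Z}^{d-1}$ with $Q(\vec t)\in\mathbb{Z}_{\geq 0}$, i.e. somewhere in $A_Q:=\bigcup_{r>0}A_{Q,r}$ (a finite set, since $Q(\vec t)\geq 0$ forces $|\vec t|<\sqrt N/q+O_d(1)$). A direct computation, using $\gcd(N,q)=1$ and $q$ odd, shows that $\{\vec t\in\mathbb{Z}^{d-1}:Q(\vec t)\in\mathbb{Z}\}$ is one coset of an index-$q$ sublattice $\mathcal L\subset\mathbb{Z}^{d-1}$ of the form \eqref{L(a)}; for it we compute a $3/4$-LLL reduced basis $\{\vec u_0,\dots,\vec u_{d-2}\}$ in $O(\log q)$ steps by Remark~\ref{remlll}, whose skewness is controlled by the angle bound of Babai's Theorem~\ref{babai}.

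Granting, for the moment, an efficient enumeration of $A_{Q,r}$, the algorithm runs as follows: sweep over radii $r=1,2,4,\dots$ up to $\sqrt N/q+O_d(1)$, past which $A_{Q,r}$ stabilises to $A_Q$; for each $r$, enumerate $A_{Q,r}$, and for every $\vec t$ produced set $m:=Q(\vec t)$, factor $m$ with Pollard's rho algorithm (polynomial-time under assumption $(*)$), and test whether every prime $\equiv 3\pmod{4}$ dividing $m$ occurs with even multiplicity; if so, apply Schoof's algorithm \cite{Schoof} to each prime $\equiv 1\pmod{4}$ dividing $m$ to write it as a sum of two squares, assemble $m=t_{d-1}^2+t_d^2$, and return $(t_0,\dots,t_{d-2},t_{d-1},t_d)$, which solves \eqref{ineq} by the reformulation above. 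The sweep continues only while $|A_{Q,r}|\leq C_\gamma(\log N)^\gamma$: the moment $|A_{Q,r}|$ exceeds this threshold, Conjecture~\ref{conj} guarantees that $Q$ takes a sum-of-two-squares value somewhere in $A_{Q,r}$, which --- having enumerated all of $A_{Q,r}$ --- we have already returned; and if the threshold is never reached then $A_Q$ has been exhausted with $|A_Q|\leq C_\gamma(\log N)^\gamma$, so the failure to find a sum-of-two-squares value of $Q$ is a correct, and (as $|A_Q|$ is only polylogarithmic) a polynomial-time, certificate that \eqref{ineq} has no integral solution.

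What is left --- the induction, and the part I expect to be the crux --- is to enumerate $A_{Q,r}$ in time polynomial in $\log q$; the obstacle is that when $\mathcal L=\mathcal L(\vec a)$ is very skew (i.e. $\eta(\vec a)$ close to $1$, so that $M(B_L)\asymp q^{\eta(\vec a)}$ is nearly $q$ while the other reduced basis vectors are short), a ball thin enough to contain only polylogarithmically many lattice points can still make a naive enumeration tree branch widely along the short Gram--Schmidt directions. I would overcome this with the variant of Lenstra's fixed-dimension integer-programming argument of \cite[Theorem~1.10]{complexity}: slice $\mathcal L$ along its longest reduced basis vector $\vec u_j$ (with $|\vec u_j|=M(B_L)$), so that by Babai's Theorem~\ref{babai} the coordinate of $\vec t$ along $\vec u_j$ takes only $O_d(r\,q^{-\eta(\vec a)})$ values inside the search region, and for each fixed value of that coordinate complete the square in the remaining $d-2$ variables to obtain, essentially, an instance of \eqref{ineq} in dimension $d-1$, to which the inductive hypothesis applies. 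This is also where the ``intersection of two balls'' enters: to keep both the number of slices per level and the recursion (of depth $d-3$) polynomial, one searches not the full feasibility body $\{Q(\vec t)\geq 0\}$ but its intersection with a suitably placed smaller ball, the two radii chosen so that the residual target at the next level of the recursion is still large enough for Conjecture~\ref{conj} to apply there, while the slab actually being swept remains polylogarithmically thin. Making this quantitative --- choosing the two radii so that the searched region simultaneously contains more than $C_\gamma(\log N)^\gamma$ admissible lattice points whenever \eqref{ineq} is solvable and only polynomially many in every case, using the inequality $\eta(\vec a)\geq 1/(d-1)$ to bound the threshold radius against $q^{\eta(\vec a)}$ --- is the delicate step, balancing the geometry-of-numbers input (LLL plus Babai) against the arithmetic input of Conjecture~\ref{conj}.
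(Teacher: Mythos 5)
Your proposal follows essentially the same approach as the paper: LLL-reduce $\mathcal{L}(\vec{a})$, use Conjecture~\ref{conj} as the arithmetic input, test candidate values of $Q$ via Pollard's rho plus Schoof, and reduce dimension by slicing along a long LLL basis vector. Two presentational differences: the paper runs this as a direct iteration over long basis vectors rather than as an induction on $d$ (there is no appeal to a $(d-1)$-dimensional instance of Proposition~\ref{intlift}; instead the proof introduces the auxiliary form $G_{l,k}(\vec y)$ and repeats the same two-case analysis on it), and the paper replaces your doubling sweep over radii with a single fixed box: Lemma~\ref{boxlem} constructs $C$ with $F\geq 0$ on $C$ and $F<0$ outside a fixed $O_d(1)$-dilate of $C$, and then branches on whether $|C|\lessgtr C_\gamma(\log N)^\gamma$ (direct enumeration versus Conjecture~\ref{conj}). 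The step you flag as ``the delicate step'' and leave unresolved is precisely where your sketch is incomplete: you would need to prove that the slicing reduces the problem size without losing control, and the paper's Lemma~\ref{line} supplies the needed quantitative bound --- when $2(d-1)|\vec{u}_k| > \sqrt{N}/q$ the coordinate $x_k$ along $\vec u_k$ is bounded by the absolute constant $2(d-1)(3/\sqrt{2})^{d-1}+1$, which is $O_d(1)$ rather than the $O_d(r\,q^{-\eta(\vec a)})$ you suggest, so no careful balancing of two radii is actually required. Your reading of ``the intersection of two balls'' as a device for tuning the recursion is therefore an overcomplication of what the paper does; in the paper the two balls are simply $|\vec t|<r$ and $Q(\vec t)\geq 0$ from the definition of $A_{Q,r}$, and the recursion terminates because each slicing step either falls into the bounded-box dichotomy or consumes one more basis direction at $O_d(1)$ cost, with $d$ fixed.
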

\begin{proof}[Proof of Theorem~\ref{main}] For $0 \leq h \leq 4\log_{p} q,$ let $N=p^{2h}$ and $b_i\equiv a_i p^h~\mathrm{mod}~q$ for $0\leq i\leq d-2.$ By theorem~\cite[Theorem 1.2]{optimal} the diophantine equation 
\eqref{ineq} has a solution for every $(3+o_q(1))\log_{p} q \leq h \leq 4\log_{p} q.$ Our goal is to find the smallest $h$ such that the equation~\eqref{ineq} has a solution, and then find a solution  to the equation~\eqref{ineq}. For $0 \leq h \leq 4\log_{p} q,$   apply the algorithm in Proposition~\ref{intlift}, in order to find an integral solution to the 
 equation~\ref{ineq}.  If there exits such a solution $(t_{h,0},\dots,t_{h,d}),$ then 
 $$\vec{s}_h:= (\frac{qt_{h,0}+b_0}{p^h},\dots,\frac{qt_{h,d-2}+b_{d-2}}{p^h},\frac{qt_{h,d-1}}{p^h},\frac{qt_{h,d}}{p^h})$$
 is a  lift for $\vec{a}\in S^d(\mathbb{Z}[1/p]).$ Otherwise the algorithm in Proposition~\ref{intlift} terminates in polynomial-time in $\log(q)$ with no solutions,  and $\vec{a}$ does not have any integral lift $\vec{s}\in S^{d}(\mathbb{Z}[1/p])$ with $H(\vec{s})=p^h$. We have a lift $\vec{s}_h$ for every $(3+o_q(1))\log_{p} q \leq h \leq 4\log_{p} q,$ let $h_{min}$ be the smallest exponent $0\leq h \leq  4\log_{p} q$ such that the lift  $\vec{s}_{h}$ exists.  Then $\vec{s}_{h_{min}}$ is a minimal lift and   this concludes the proof of Theorem~\ref{main}.
  \end{proof}

Next, we prove two auxiliary lemmas and finally give a proof of Proposition~\ref{intlift}. By rearranging~\eqref{ineq}, we have
\begin{equation}\label{maineq}t_{d-1}^2 + t_d^2 = N/q^2 -(t_0 + b_0/q)^2 - \dots - (t_{d-2} + b_{d-2}/q)^2.
\end{equation}
Let $Q(\vec{t}):=N/q^2 - |\vec{t}+\frac{1}{q}\vec{b}|^2,$ where $\vec{t}= (t_0,\dots,t_{d-2}),$ $\vec{b}=(b_0,\dots,b_{d-2})$ and $|.|$ is the Euclidean norm.

 Recall the definition of $A_{Q,r}$ from~\eqref{Afr}, where $r>0$ is some real number. By Conjecture~\ref{conj}, if $ |A_{Q,r}|> C_{\gamma}(\log N)^{\gamma}$ then the equation~\eqref{maineq} has a solution, where $\vec{t}\in A_{Q,r}.$ 
Let \(k := (N - |\vec{b}|^2)/q\). Since $|\vec{b}|^2\equiv N~\mathrm{ mod }~q$, $k\in \mathbb{Z}.$ We can further rearrange~\eqref{maineq}:
\[t_{d-1}^2 + t_d^2 = (k - 2\langle \vec{b}, \vec{t}\rangle)/q -|\vec{t}|^2.\]
Note that $\vec{t}\in A_{Q,r}$ iff  the following two conditions are satisfied:

\begin{itemize}
\item
  {Condition 1:}
  \(|\vec{t}+\frac{1}{q}\vec{b}|^2 \leq N/q^2\), and $|\vec{t}|<r.$
\item
  {Condition 2:} \(2\langle \vec{b}, \vec{t}\rangle \equiv k~\mathrm{ mod }~ q\).
\end{itemize}
We first focus on Condition 2.  Without loss of generality, we assume that  $a_0\not\equiv 0$ mod $q$.  Then $2b_0\equiv 2p^ha_0 \not\equiv 0~\mathrm{mod}~q $ and $2b_0$  has an inverse mod $q$. Let $\tilde{b}_0\leq \frac{q-1}{2}$ be the integer such that  $\tilde{b}_0\equiv (2b_0)^{-1}~\mathrm{mod}~q$. Then
$\vec{t}_0:=(k\tilde{b}_0,0,\dots,0)$  is a solution for the congruence equation in Condition (2). Since $p^h\vec{a}\equiv \vec{b}~\mathrm{mod}~q$,
the integral solutions of Condition (2) are the translation of the lattice points of $\mathcal L(\vec{a})$ by $\vec{t}_0.$ Let \(\{\vec u_0,\dots,\vec u_{d-2}\}\) be the 3/4-LLL reduced basis 
 for $\mathcal L(\vec{a})$ that is defined in the proof of Theorem~\ref{algorithm}. We write 
\[\vec{t}_0+\frac{1}{q}\vec{b}=\sum_{i=0}^{d-2}c_i \vec{u}_i,\]
for sum $c_i\in \frac{1}{q^2}\mathbb{Z}.$ Let  $\tilde{\vec{t}}_0= \sum_{i=0}^{d-2}r_i \vec{u}_i,$ where $|r_i|\leq 1/2$ and $c_i-r_i\in \mathbb{Z}$ for every $0\leq i\leq d-2.$
Assume that $\vec{t}\in \mathbb{Z}^{d-1}$  satisfies Condition (2). Then, there exists  a one to one correspondence between $\vec{t}$ and 
\(\vec{x} := (x_0,\dots,x_{d-2})\in \mathbb{Z}^{d-1}\), such that:
\[ 
\tilde{\vec{t}}_0+\sum_{i=0}^{d-2}x_i\vec{u}_i=\vec{t}+\frac{1}{q}\vec{b}.
\]
Let
\[F(\vec{x}) :=N/q^2 - | \tilde{\vec{t}}_0+\sum_{i=0}^{d-2}x_i\vec{u}_i|^2.\] Note that $F(\vec{x})=Q(\vec{t})$ by the above correspondence, and  $F(\vec{x})\in \mathbb{Z}$ for every $\vec{x}\in \mathbb{Z}^{d-1}.$
Clearly Condition (1) is satisfied if and only if \(F \geq 0\).  


We prove two general lemmas for listing the positive values of $F(\vec{x})$. Assume that $\{\vec{w}_1, \dots,\vec{w}_m\}$ is a $3/4$-LLL basis for $\mathbb{R}^m.$ Let $\tilde{\vec{w}}_0=\sum_{i=1}^m s_i\vec{w}_i,$ where $|s_i|<1/2.$ Define
$$
H(x_1,\dots,x_m):=M^2-|\tilde{\vec{w}}_0+\sum_{i=1}^{m}x_i\vec{w}_i|^2,
$$
where $M$ is some real number. 

\begin{lem}\label{line}
Assume that $  \alpha |\vec{w}_k|> M $ for some $\alpha>0$, and $H(x_1,\dots,x_m)>0$, then $$|x_k|\leq \alpha(\frac{3}{\sqrt{2}})^m+1.$$
\end{lem}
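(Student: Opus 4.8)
\textbf{Proof proposal for Lemma~\ref{line}.}

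The plan is to exploit the geometry of an LLL-reduced basis via Babai's theorem (Theorem~\ref{babai}), which guarantees that no basis vector $\vec w_k$ lies too close to the span $U_k$ of the remaining vectors. The point is that if $H(x_1,\dots,x_m)>0$, then the vector $\vec p := \tilde{\vec w}_0 + \sum_{i=1}^m x_i \vec w_i$ has norm at most $M$, so it lies inside a ball of radius $M$. We want to bound the coefficient $x_k$ of $\vec w_k$ in this expansion. The key observation is that the component of $\vec p$ orthogonal to $U_k$ is exactly $(x_k + s_k)$ times the component of $\vec w_k$ orthogonal to $U_k$, since every other $\vec w_i$ with $i \neq k$ and the part of $\tilde{\vec w}_0$ along those vectors contribute nothing to the $U_k^\perp$-direction — only the $s_k \vec w_k$ piece of $\tilde{\vec w}_0$ does. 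Hence
$$
\operatorname{dist}(\vec p, U_k) = |x_k + s_k| \cdot \operatorname{dist}(\vec w_k, U_k) = |x_k + s_k| \cdot |\vec w_k| \sin\theta_k,
$$
where $\theta_k$ is the angle between $\vec w_k$ and $U_k$.

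First I would record the orthogonal-decomposition identity above carefully, writing $\vec p = \vec u + c\,\vec w_k'$ where $\vec u \in U_k$, $\vec w_k'$ is the unit vector in the $U_k^\perp$ direction, and $c = (x_k + s_k)|\vec w_k|\sin\theta_k$. Then, since $|\vec p| \le M$ (from $H > 0$) and $\operatorname{dist}(\vec p, U_k) \le |\vec p|$, I get
$$
|x_k + s_k|\,|\vec w_k|\sin\theta_k \le M.
$$
Next I would apply Babai's bound $\sin\theta_k \ge (\sqrt 2/3)^m$ from Theorem~\ref{babai} (valid for the $3/4$-LLL reduced basis $\{\vec w_1,\dots,\vec w_m\}$), together with the hypothesis $\alpha|\vec w_k| > M$, i.e. $M/|\vec w_k| < \alpha$, to obtain
$$
|x_k + s_k| \le \frac{M}{|\vec w_k|\sin\theta_k} < \alpha \left(\frac{3}{\sqrt 2}\right)^m.
$$
Finally, since $|s_k| < 1/2 < 1$, the triangle inequality gives $|x_k| \le |x_k + s_k| + |s_k| < \alpha(3/\sqrt 2)^m + 1$, which is the claimed bound.

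The main obstacle — really the only nontrivial point — is justifying the orthogonal-decomposition identity, namely that the $U_k^\perp$-component of $\tilde{\vec w}_0 + \sum x_i \vec w_i$ picks up contributions only from the $x_k \vec w_k$ term and the $s_k \vec w_k$ term. This is immediate once one notes that $\vec w_i \in U_k$ for all $i \neq k$, so projecting onto $U_k^\perp$ annihilates all of them; what survives is $(x_k + s_k)$ times the projection of $\vec w_k$ onto $U_k^\perp$, whose length is $|\vec w_k|\sin\theta_k$ by definition of $\theta_k$. One should also note Babai's theorem applies: the basis $\{\vec w_1,\dots,\vec w_m\}$ is assumed $3/4$-LLL reduced, which is exactly the hypothesis of Theorem~\ref{babai}. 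Everything else is the triangle inequality and substitution.
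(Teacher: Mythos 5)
Your proof is correct and follows essentially the same argument as the paper: bound $|\vec p|\le M$ from $H>0$, project onto $U_k^\perp$ to isolate $|x_k+s_k|\,|\vec w_k|\sin\theta_k$, then apply Babai's theorem and the hypothesis $\alpha|\vec w_k|>M$. You even spell out the orthogonal-decomposition step that the paper leaves implicit in its first displayed inequality; note that using $|s_k|<1/2$ instead of $|s_k|<1$ actually yields the slightly sharper $+1/2$ that the paper's proof obtains.
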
 
\begin{proof} Since  $H(x_1,\dots,x_m)>0,$ we have 
$$
0<H(x_1,\dots,x_m)\leq M^2-\big(  \sin \theta_k(x_k+s_k)|\vec{w}_k|\big)^2. 
$$
By Theroem~\ref{babai} and $ \alpha  |\vec{w}_k|>  M,$ we have 
$$
\big|  \sin \theta_k (x_k+s_k)|\vec{w}_k| \big|\geq  (\frac{\sqrt{2}}{3})^m  (|x_k|-1/2)\frac{M}{\alpha} .$$
Hence,
$$
|x_k|\leq \alpha (\frac{3}{\sqrt{2}})^m+1/2.
$$
This concludes the lemma. 
\end{proof}

\begin{lem}\label{boxlem}
Assume that $2m|\vec{w}_i|< M$ for $1\leq i\leq m$. Let $A_i:=\frac{M}{m|\vec{w}_i|}-1/2$  and $ C:= \prod_{i=1}^{m} [-A_i,A_i].$  Then  $H(\vec{x})$ is positive for every $\vec{x}\in C$ and negative outside $2m(\frac{3}{\sqrt{2}})^mC$.
\end{lem}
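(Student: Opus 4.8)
The plan is to prove Lemma~\ref{boxlem} by two separate estimates: a lower bound showing $H$ is positive on the box $C$, and an upper bound showing $H$ is negative outside the dilated box $2m(\tfrac{3}{\sqrt 2})^m C$. For the first part, I would write $\tilde{\vec w}_0 + \sum_i x_i \vec w_i = \sum_i (x_i + s_i)\vec w_i$ and bound the Euclidean norm crudely by the triangle inequality: $|\sum_i (x_i+s_i)\vec w_i| \le \sum_{i=1}^m (|x_i| + 1/2)|\vec w_i|$. For $\vec x \in C$ we have $|x_i| \le A_i = \frac{M}{m|\vec w_i|} - 1/2$, so each term is at most $\frac{M}{m|\vec w_i|}\cdot |\vec w_i| = \frac{M}{m}$, and the sum is at most $M$. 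Hence $|\tilde{\vec w}_0 + \sum x_i \vec w_i| \le M$, which gives $H(\vec x) \ge 0$; to get strict positivity I would note the triangle inequality is strict unless all the $\vec w_i$ are parallel, which is impossible for a basis, or simply observe that the problem only needs the non-strict version in practice (or shrink $A_i$ imperceptibly). The key input here is elementary — just the triangle inequality and the definition of $A_i$.

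For the second part — that $H$ is negative outside $2m(\tfrac{3}{\sqrt 2})^m C$ — the idea is that if $\vec x \notin 2m(\tfrac{3}{\sqrt 2})^m C$, then some coordinate is large, say $|x_k| > 2m(\tfrac{3}{\sqrt 2})^m A_k$, and I want to show this forces $|\tilde{\vec w}_0 + \sum x_i \vec w_i| > M$. This is exactly where Babai's theorem (Theorem~\ref{babai}) enters: projecting onto the orthogonal complement of $U_k = \sum_{j\neq k}\mathbb{R}\vec w_j$, the $k$-th component of $\tilde{\vec w}_0 + \sum x_i \vec w_i$ in the direction perpendicular to $U_k$ has length $|x_k + s_k|\cdot \sin\theta_k \cdot |\vec w_k| \ge (\tfrac{\sqrt 2}{3})^m (|x_k| - 1/2)|\vec w_k|$. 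So $|\tilde{\vec w}_0 + \sum x_i\vec w_i| \ge (\tfrac{\sqrt 2}{3})^m(|x_k| - 1/2)|\vec w_k|$, and I need this to exceed $M$. Since $A_k = \frac{M}{m|\vec w_i|} - 1/2$ we have $\frac{M}{|\vec w_k|} = m(A_k + 1/2)$, so it suffices to check $(\tfrac{\sqrt 2}{3})^m(|x_k|-1/2) > m(A_k + 1/2)$; using $|x_k| > 2m(\tfrac{3}{\sqrt 2})^m A_k$ and the hypothesis $2m|\vec w_i| < M$ (which guarantees $A_i > 1/2$, keeping all constants positive and the arithmetic clean), this reduces to a routine inequality I would verify by bounding $|x_k| - 1/2 \ge \tfrac12|x_k| \ge m(\tfrac{3}{\sqrt 2})^m A_k$ and then $A_k \ge \tfrac{1}{2}(A_k + 1/2)$ since $A_k \ge 1/2$. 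Chaining these gives $(\tfrac{\sqrt 2}{3})^m(|x_k|-1/2) \ge m(\tfrac{3}{\sqrt 2})^{m}(\tfrac{\sqrt 2}{3})^m A_k \cdot \tfrac{(A_k+1/2)}{A_k}\cdot\tfrac12 \cdot 2 $... so I would organize the constants carefully, but morally the factor $2m(\tfrac{3}{\sqrt 2})^m$ is built precisely to absorb the $(\tfrac{3}{\sqrt 2})^m$ loss from Babai and the $m$ and $1/2$ slack.

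The main obstacle I anticipate is purely bookkeeping: making sure the dilation factor $2m(\tfrac{3}{\sqrt 2})^m$ is exactly the right size and that the hypothesis $2m|\vec w_i| < M$ is used in the right place to keep $A_i \ge 1/2$ (so that $|x_k| - 1/2$ and $A_k + 1/2$ stay comparable to $|x_k|$ and $A_k$ respectively). There is no deep idea beyond combining the crude triangle inequality (for the inner box) with Babai's angle bound (for the outer box); the delicacy is that these two constants must be consistent so that the lemma is actually usable in the proof of Proposition~\ref{intlift} — namely, one wants to enumerate lattice points with $F \ge 0$ by searching the finite outer box and one wants a guaranteed-nonempty inner box to compare sizes against $|A_{Q,r}|$. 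I would therefore present the argument with the outer-box estimate first (since it is the one requiring Babai and determines the constant), then the inner-box estimate, and finally remark that together they sandwich the region $\{F \ge 0\}$ between two explicit boxes whose volumes differ by a factor depending only on $m$.
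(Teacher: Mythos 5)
Your approach is the paper's: triangle inequality for the inner box, Babai's angle bound for the outer box, and both parts are structured the same way as in the paper's proof. The one place where your arithmetic does not close is the final chain in the outer-box estimate. You bound $(\tfrac{\sqrt 2}{3})^m(|x_k|-1/2) \ge (\tfrac{\sqrt 2}{3})^m\cdot\tfrac12|x_k| \ge mA_k \ge \tfrac{m}{2}(A_k+\tfrac12)$, which lands you at $\tfrac{m}{2}(A_k+\tfrac12)$ rather than the required $m(A_k+\tfrac12)=M/|\vec w_k|$; you have lost a factor of $2$. The culprit is the step $|x_k|-\tfrac12\ge\tfrac12|x_k|$, which throws away nearly half of what you start with. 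The paper keeps the subtraction exact: from $|x_k|\ge 2m(\tfrac{3}{\sqrt 2})^m A_k$ it writes $|x_k|-\tfrac12\ge 2m(\tfrac{3}{\sqrt 2})^m A_k-\tfrac12$, so after multiplying by $(\tfrac{\sqrt 2}{3})^m|\vec w_k|$ one gets $2mA_k|\vec w_k|-\tfrac12(\tfrac{\sqrt 2}{3})^m|\vec w_k|=2M-m|\vec w_k|-\tfrac12(\tfrac{\sqrt 2}{3})^m|\vec w_k|$, and the hypothesis $2m|\vec w_k|<M$ then gives $m|\vec w_k|+\tfrac12(\tfrac{\sqrt2}{3})^m|\vec w_k|<\tfrac{M}{2}+\tfrac{M}{4}<M$, so the whole expression exceeds $M$ with room to spare. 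You acknowledged the constants were not nailed down; this is exactly where the slack leaked. Your remark that the inner-box argument only yields $H\ge 0$ (not $H>0$) is correct and applies verbatim to the paper's own proof as well; it is harmless in the application, and for $m\ge 2$ strictness follows because equality in the triangle inequality would force the basis vectors to be collinear.
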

 \begin{proof}
  Recall that $H(x_1,\dots,x_m)=M^2-|\tilde{\vec{w}}_0+\sum_{i=1}^{m}x_i\vec{w}_i|^2,$ and $\tilde{\vec{w}}_0=\sum_{i=1}^m s_i\vec{w}_i,$ where $|s_i|<1/2.$ Assume that $\vec{x}\in C.$
 By the triangle inequality 
\begin{align*}
H(\vec{x})\geq M^2 - \big(\sum_{i=1}^{m} (|x_i|+1/2) |\vec{w}_i|\big)^2 
&\geq M^2 - \big(\sum_{i=1}^{m} (A_i+1/2) |\vec{w}_i|\big)^2
\\&\geq M^2 -M^2 =0.
\end{align*}
Next, we show that $H$ is negative outside $2m(\frac{3}{\sqrt{2}})^mC.$ Assume that $\vec{y}:=(y_1,\dots,y_{m})\not\in 2m(\frac{3}{\sqrt{2}})^mC.$ Hence, there exits $1\leq k\leq m$ such that $|y_k| \geq 2m(\frac{3}{\sqrt{2}})^mA_k.$  By Theorem~\ref{babai} and the assumption $2m|\vec{w}_k|< M$, we obtain
\begin{align*}
F(\vec{y})&\leq M^2-\big(\sin \theta_k (y_k+r_k)|\vec{w}_k|\big)^2 
\\
&\leq M^2-\Big((\frac{\sqrt{2}}{3})^m \big(2m(\frac{3}{\sqrt{2}})^m(\frac{M}{m|\vec{w}_k|}-1/2)-1/2 \big)|\vec{w}_k|\Big)^2<0
 \end{align*}
%
 This concludes our lemma. 
\end{proof}
Finally, we give a proof of Proposition~\ref{intlift}.
\begin{proof}[Proof of Proposition~\ref{intlift}] 
 Recall the notations and the assumptions while formulating Proposition~\ref{intlift}. We develop an algorithm that finds a solution to the equation~\eqref{maineq} in polynomial-time in $\log(q),$ and if it does not have a solution, it terminates in polynomial-time in $\log(q)$. 
 
 First,  assume that 
$2(d-1)|\vec{u}_i|< \frac{N}{\sqrt{q}}$ for every $1\leq i\leq d-1.$ By Lemma~\ref{boxlem}, there exists a box $C$ such that $F(\vec{x})$ is positive inside $C$ and it is 
negative outside  $2(d-1)(\frac{3}{\sqrt{2}})^{d-1}C$. We consider two cases. 
\begin{itemize}
\item{Case 1:} if $|C|\leq C_{\gamma}\log(N)^{\gamma},$  
\item{Case 2:} if $|C|> C_{\gamma}\log(N)^{\gamma}.$
\end{itemize}
where $|C|=\prod_{i=0}^{d-2}A_i,$  $C_{\gamma}$ and $\gamma$ are defined in  Conjecture~\ref{conj}.

For Case 1, we check if any point  $\vec{x}\in 2(d-1)(\frac{3}{\sqrt{2}})^{d-1}C$ gives a solution to the equation~\ref{maineq} as follows.  We factor $F(\vec{x})$ in polynomial-time in $\log(q)$ into its prime powers,  by our assumed polynomial-time factoring algorithm. We check if all the prime factors  with the odd exponent are congruent to $1$ mod 4.  Finally, we use Schoof's algorithm~\cite{Schoof} to express each prime divisor  \(p \equiv 1~\mathrm{ mod }~ 4\) as a sum of two squares. Since  $|C|\leq C_{\gamma}\log(N)^{\gamma},$ this conduces the proof of Proposition~\ref{intlift}. 

For Case 2, by Conjecture~\ref{conj},  there exists $\vec{x}\in C$ such that 
$F(\vec{x})=t_{d-1}^2+t_d^2$ for some $t_{d-1},t_d\in \mathbb{Z},$  where $|\vec{x}|\leq C_{\gamma}\log(N)^{\gamma}. $ Similarly, we find such  $|\vec{x}|\leq C_{\gamma}\log(N)^{\gamma}$ in polynomial time. This conduces the proof of Proposition~\ref{intlift} if $2(d-1)|\vec{u}_i|< \frac{N}{\sqrt{q}}$ for every $1\leq i\leq d-1.$

 Otherwise, there exists $0\leq k\leq d-2$
such that $2(d-1)|\vec{u}_k|> \frac{N}{\sqrt{q}}.$ By Lemma~\ref{line}, $$|x_k|\leq  2(d-1)(\frac{3}{\sqrt{2}})^{d-1}+1.$$
Since $d$ is fixed, there are only a bounded number of choices for $x_k\in \mathbb{Z}.$ Let $x_k=l$ for some $l\in\mathbb{Z},$ where $|l|\leq 2(d-1)(\frac{3}{\sqrt{2}})^{d-1}+1.$ Hence,
\[F(\vec{x}) :=N/q^2 - | \tilde{\vec{t}}_0+l \vec{u}_k+\sum_{i\neq k}^{d-2}x_i\vec{u}_i|^2.\]
We write uniquely  $\tilde{\vec{t}}_0+l \vec{u}_k= \vec{u}_{k,1}+ \vec{u}_{k,2},$ where $\vec{u}_{k,1}=\sum_{i\neq k} \alpha_i \vec{u}_i$ and $\vec{u}_{k,2}$ is orthogonal to $\sum_{i\neq k}\mathbb{R}\vec{u}_i.$ Hence,
\[
F(\vec{x})=M- | \tilde{\vec{w}}_0+\sum_{i\neq k}^{d-2}y_i\vec{u}_i|^2
\]
where $M:=(N/q^2 - |\vec{u}_{k,2}|^2),$ $\tilde{\vec{w}}_0:=\sum_{i\neq k} s_i \vec{u}_i,$ where $|s_i|\leq 1/2$ and $s_i-\alpha_i\in \mathbb{Z},$ and $y_i=x_i+\alpha_i-s_i.$ Let 
\[
G_{l,k}(\vec{y}):=M- | \tilde{\vec{w}}_0+\sum_{i\neq k}^{d-2}y_i\vec{u}_i|^2.
\]
Next, we use a similar argument as in the beginning of our proof.  We assume that $2(d-1)|\vec{u}_i|< M$ for all $i\neq k,$ and proceed with the same argument on $G_{l,k}(\vec{y})$ as $F(\vec{x})$.  
We either find a solution for the equation~\eqref{maineq}, or find another variable with bounded value. Since the dimension $d$ is bounded this algorithm terminates in polynomial time in $\log(q).$ This completes the proof of Proposition~\ref{intlift}. 
\end{proof}
Finally, we give a proof of Theorem~\ref{dioexpthm}
\begin{proof}
Assume that 
\[h \geq (\log_p q)\Big(1 + \eta(\vec{a}) + \frac{2d+ \log_2 2C_{\gamma}+\gamma\log_2 5\log q}{\log_2(q)}\Big).\]
 Let \(B_L:=\{\vec u_0,\dots,\vec u_{d-2}\}\) be the LLL-reduced basis that is introduced in 
the proof of Theorem~\ref{algorithm}. It follows from the proof of   Theorem~\ref{algorithm} that 
$$
\eta(\vec{a})+\frac{d}{\log_2(q)}\geq \log_q(M(B_L)).
$$
Hence, for every $0 \leq i\leq d-2$, we have 
\[
h\geq (\log_p q) \Big(1+\log_q(|\vec{u}_i|) +\frac{d+ \log_2 2C_{\gamma}+\gamma\log_25\log q}{\log_2(q)}\Big).
\]
Let $N:=p^{2h}$, we have 
\[
\frac{\sqrt{N}}{q}\geq 2d |\vec{u}_i|C_\gamma (\log q^5)^{\gamma}.
\]
Assume that $N\leq q^5,$ then 
$$
\frac{\sqrt{N}}{q}\geq 2d |\vec{u}_i|C_\gamma (\log N)^{\gamma}.
$$
By the proof of Proposition~\ref{intlift}, if follows that there exists an integral lift $\vec{s}\in S^{d}(\mathbb{Z}[1/p])$ with  $H(\vec{s})=p^h.$ Therefore,
$$
w_p(\vec{a}) \leq \frac{d-1}{d}(1 + \eta(\vec{a})) + O_d(\log\log(q)/\log(q)).
$$
This concludes the proof of Theorem~\ref{dioexpthm}.
\end{proof}

\section{Numerical results}\label{numeric}

We now give numerical evidence for Conjecture~\ref{conj} by testing identity~\ref{expiden} for $d = 4$. Figure~\ref{linear}, shown in the introduction, was produced by choosing the three non-zero coordinates in $S^4(\mathbb{Z}/q\mathbb{Z})$ randomly on a logarithmic scale. This was done specifically by first choosing an integer $r$ randomly from $60$ to $125$ for each coordinate, then choosing an integral representative of the coordinate randomly from $0$ to $10^{-r}q$. This was done $100$ times for each of eight $130$-digit primes listed below, and all points were included in the figure:\\

\begin{itemize}
\item{$q_1 = $} 8 7 1 4 7 7 2 9 7 6 3 8 1 6 9 7 8 5 9 5 5 7 9 6 5 3 5 5 7 9 1 7 4 5 9 9 7 9 8 7 7 2 8 3 0 0 2 2 6 8 1 2 7 3 2 5 1 9 7 0 2 2 5 0 4 6 3 3 2 9 2 6 5 8 2 6 0 3 3 2 6 8 9 2 6 1 7 1 6 1 0 3 9 2 2 1 1 7 5 0 0 4 4 9 5 7 7 1 9 8 9 6 3 2 0 7 7 6 7 9 4 5 5 5 1 4 6 5 8 1
\item{$q_2 = $} 4 4 8 6 8 4 7 1 8 8 0 5 2 9 1 9 9 9 8 8 4 0 4 4 5 6 3 3 1 9 7 1 8 5 8 3 7 4 1 2 6 9 4 5 5 5 9 2 0 4 7 0 5 0 5 2 7 2 9 6 0 8 8 0 2 9 2 1 1 8 6 9 9 5 7 4 8 6 0 4 1 9 6 0 2 0 2 5 0 7 5 6 3 9 1 9 7 6 3 9 9 7 3 6 2 3 2 4 1 1 7 4 5 5 4 6 4 0 2 3 9 9 4 0 4 6 8 4 9 1
\item{$q_3 = $} 3 0 0 7 4 8 1 5 1 9 7 7 4 8 4 1 7 0 0 4 4 7 1 6 9 1 0 3 6 9 9 4 8 0 4 7 9 2 5 7 0 7 5 1 5 8 0 0 4 4 2 9 1 7 2 5 0 5 4 9 3 1 6 9 6 2 4 2 1 1 7 3 8 1 6 5 6 2 6 2 0 4 2 2 1 6 5 3 1 1 5 5 1 8 1 7 1 8 9 5 1 2 4 2 5 4 9 5 5 0 50 1 9 2 4 7 6 6 4 7 3 9 3 2 4 0 5 4 9
\item{$q_4 = $} 1 1 8 2 9 1 1 1 4 1 4 9 7 5 4 1 1 4 0 5 8 6 6 1 5 3 5 9 2 3 0 2 4 3 5 9 2 8 3 4 25 9 9 3 5 9 5 1 4 5 7 9 2 3 8 9 0 1 0 0 9 0 1 1 4 3 7 2 4 7 0 2 2 9 5 7 5 0 5 4 3 1 8 0 7 5 7 8 8 6 0 0 3 7 6 2 2 5 0  8 2 2 0 1 4 7 7 0 4 3 2 4 0 6 7 8 5 9 7 5 2 2 0 7 1 2 5 5 1
\item{$q_5 = $} 5 4 5 2 2 1 2 1 5 1 8 4 4 2 6 4 4 5 3 1 1 5 5 2 1 7 7 0 3 6 2 7 1 2 8 1 5 3 0 0 4 5 6 7 8 8 0 7 3 8 7 0 2 6 0 6 3 7 1 7 2 0 0 6 4 1 4 9 8 7 4 7 9 9 1 4 1 5 0 8 3 1 8 2 1 2 0 2 2 5 9 8 6 2 0 9 1 3 7 3 7 4 1 7 3 8 5 1 1 5 7 9 6 2 9 0 4 0 7 3 2 9 0 9 1 9 4 8 8 3
\item{$q_6 = $} 6 5 3 9 0 1 0 9 3 5 8 3 6 2 4 8 6 9 8 1 3 0 7 9 7 8 9 5 0 0 9 7 0 2 3 6 5 5 0 2 8 9 4 5 0 6 1 0 9 6 0 0 3 3 5 0 0 3 5 4 9 6 4 6 8 0 7 8 4 8 5 9 7 7 7 0 3 8 7 6 0 7 6 1 2 7 6 7 2 8 5 5 5 8 4 4 0 8 8 9 4 4 4 1 3 2 8 2 4 6 9 6 9 1 4 1 1 3 6 0 3 5 6 9 7 1 3 1 5 2
\item{$q_7 = $} 1 9 6 6 6 0 5 1 7 0 3 5 1 8 8 3 0 0 3 6 6 9 3 8 1 3 2 6 7 2 6 2 5 4 8 6 5 0 9 5 1 8 5 7 3 6 1 5 9 6 1 0 5 1 1 2 9 0 3 1 7 2 3 8 3 1 5 2 3 2 4 2 1 3 3 5 3 4 0 7 8 7 9 5 3 2 2 3 3 7 4 9 5 0 9 7 1 8 5 0 2 8 7 5 1 7 5 6 1 7 2 5 1 8 3 5 2 5 3 4 9 9 0 1 7 2 9 7 9 2
\item{$q_8 = $} 4 5 7 8 4 8 7 2 7 4 2 0 8 4 8 2 7 2 2 0 6 7 2 3 8 0 3 9 3 4 0 3 6 9 5 8 1 5 9 7 5 4 2 8 5 9 6 1 0 6 2 8 5 6 8 5 2 5 9 8 4 9 4 4 9 5 3 6 4 9 1 7 8 9 3 7 3 5 1 2 6 5 6 3 8 9 9 8 8 5 6 7 6 4 6 3 9 7 8 5 6 8 6 8 2 9 3 8 0 2 0 2 6 3 7 9 1 3 3 2 9 7 3 9 2 3 5 3 2 8
\end{itemize}

\subsection{Generic Coordinates}
There are several cases which are worthy of special consideration. The generic element of $S^2(\mathbb{Z}/q\mathbb{Z})$ has coordinates of size $q$, so we expect $\eta(\vec{a}) = 1/3$ and $w_p(\vec{a}) = 1$ for most lattices. Figure~\ref{generic} shows that this is indeed the case, using the same primes and number of points as Figure~\ref{linear}. The coordinates are chosen between $0$ and $q$ on a linear, rather than logarithmic scale. The horizontal lines observed on the small-scale are a result of $H(\vec{a})$, and therefore $w_p(\vec{a})$, taking much more discrete values than $\eta(\vec{a})$.

\begin{figure}[H]
\centering
\raisebox{-0.6cm}{
\includegraphics[width=\textwidth]{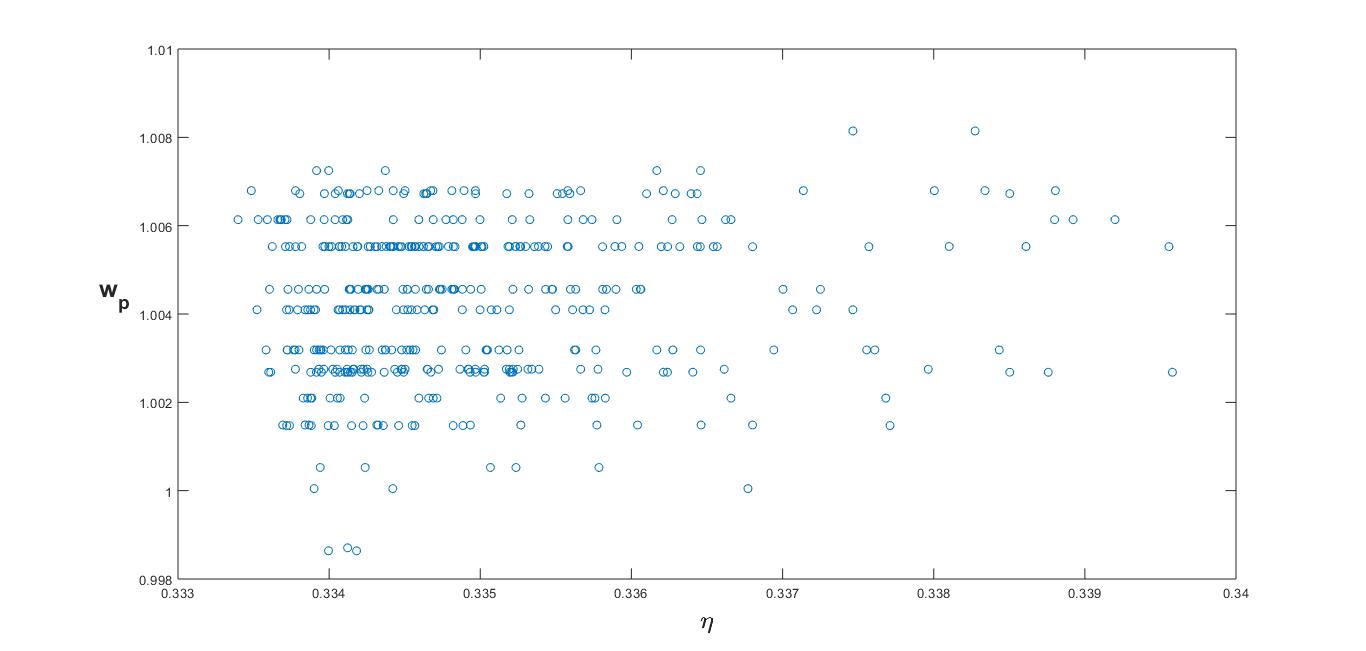}
}
\caption{Generic Coordinates}
\label{generic}
\end{figure}

\subsection{Small Coordinates}
When all coordinates are small, the lattice is quite high in the cusp, and therefore one expects $\eta(\vec{a}) = 1$ and $w_p(\vec{a}) = 3/2$, which is observed in Figure~\ref{small}. Here all coordinates are chosen between $0$ and $10^{-125}q$.

\begin{figure}[H]
\centering
\raisebox{-0.6cm}{
\includegraphics[width=\textwidth]{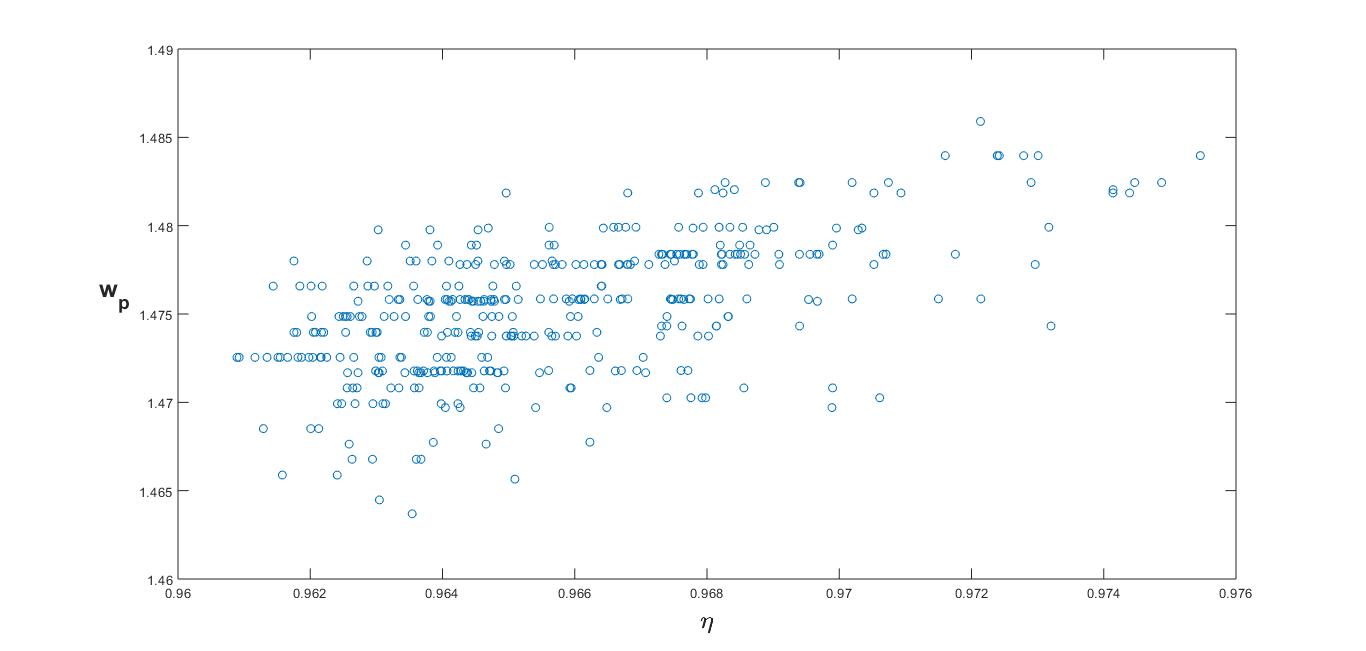}
}
\caption{Small Coordinates}
\label{small}
\end{figure}

\subsection{Other Cusp Regions}
One can explore additional cusp cases by fixing one or two coordinates and varying the rest on a logarithmic scale. Figures ~\ref{oneFixed} and ~\ref{twoFixed} show that identity~\ref{expiden} still holds in these two cases. The fixed coordinate is set to $1$, and the other coordinates are chosen as in Figure~\ref{linear}. Note that in Figure~\ref{twoFixed}, where only one coordinate is large, the lattices are relatively high in the cusp, but the corresponding points still adhere to the theoretical line.

\begin{figure}[H]
\centering
\raisebox{-0.6cm}{
\includegraphics[width=\textwidth]{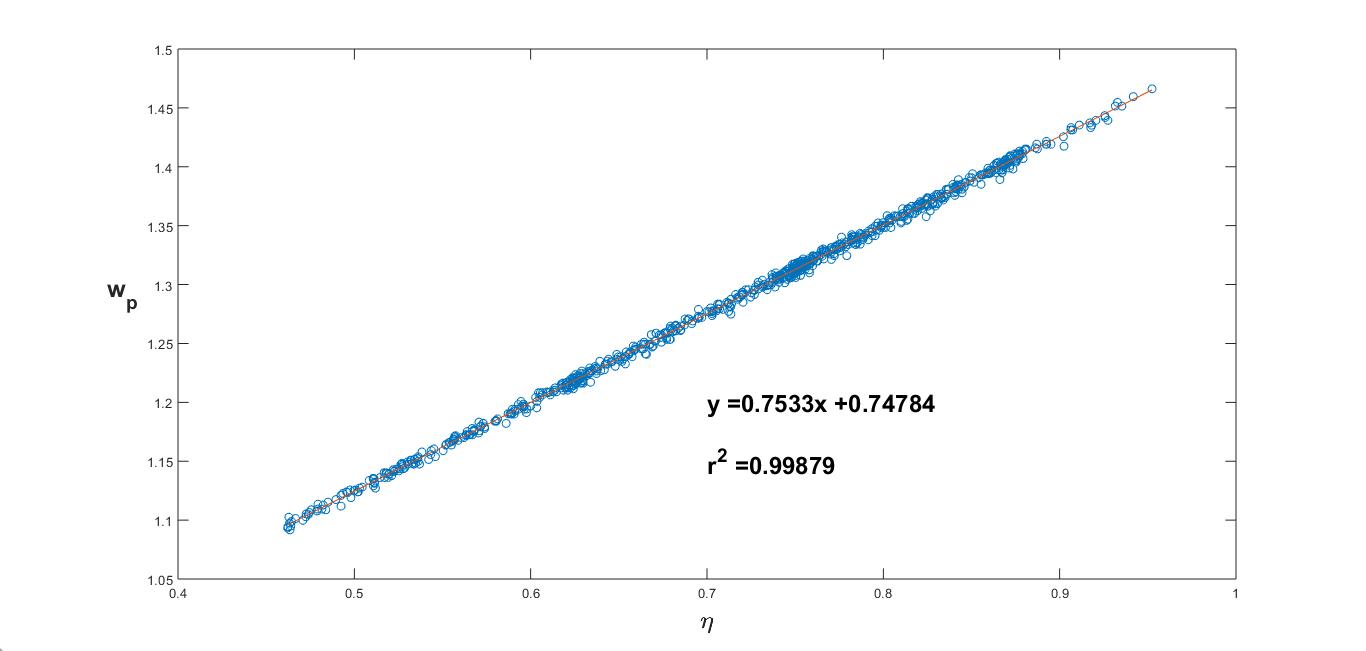}
}
\caption{One Coordinate Fixed}
\label{oneFixed}
\end{figure}

\begin{figure}[H]
\centering
\raisebox{-0.6cm}{
\includegraphics[width=\textwidth]{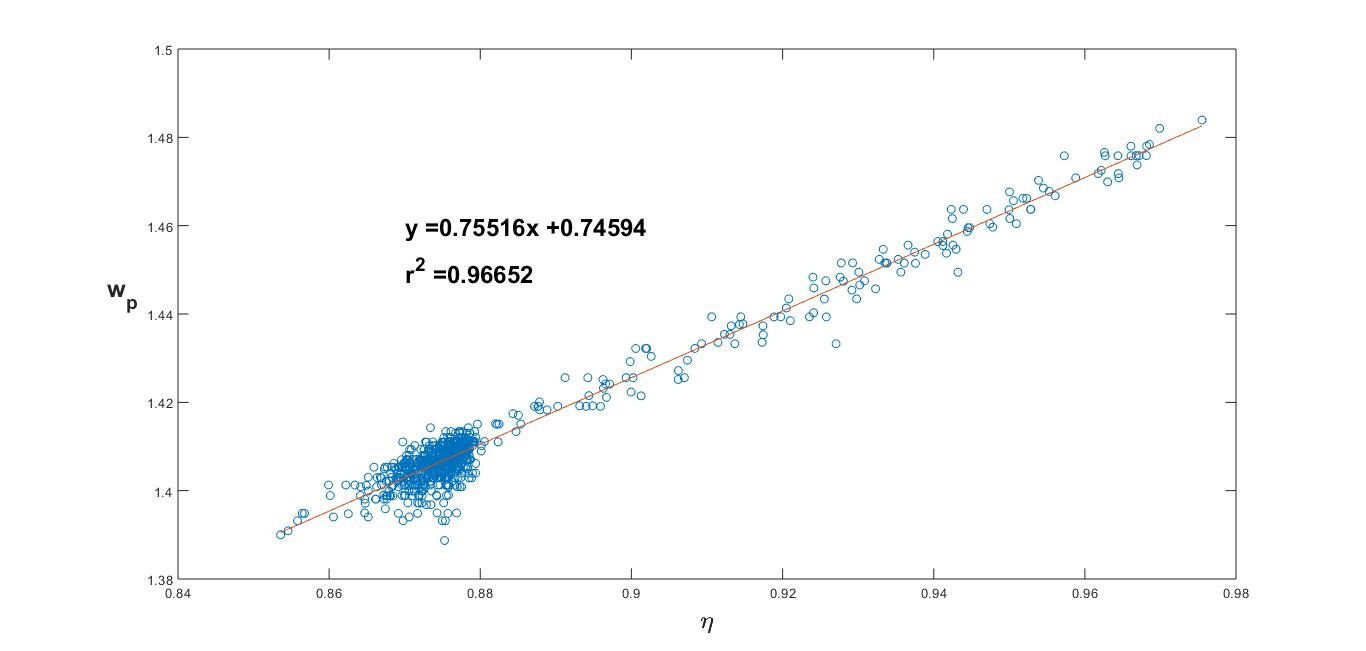}
}
\caption{Two Coordinates Fixed}
\label{twoFixed}
\end{figure}

\bibliographystyle{alpha}
\bibliography{Paper}

\end{document}